\newtheorem{theorem}{Theorem}
\newtheorem{proposition}{Proposition}
\newtheorem{lem}{Lemma}
\theoremstyle{definition}
\newtheorem{definition}{Definition}
\newtheorem{remark}{Remark}
\begin{document}

\title{Graphs of trigonal curves and rigid isotopies of singular real algebraic curves of bidegree $(4,3)$ on a hyperboloid}
		
	\author{V.~I. Zvonilov\thanks{The work was done on a scientific project of the State assignment FSWR-2023-0034.}}
%
\maketitle	
	
	\begin{abstract} 
		A rigid isotopy of  real algebraic curves of a certain class
		is  a path in the space of curves of this class. The paper's study completes the rigid isotopic classification of nonsingular real algebraic curves of
		bidegree (4,3) on a hyperboloid, begun by the author in earlier works.   There are given the missing proofs of the uniqueness of the connected components for 16 classes of real algebraic curves of bidegree (4,3)
		having a single node or a cusp.  
		The main technical tools are graphs of real trigonal curves on Hirzebruch surfaces.
	\end{abstract}
	
	

\newcommand{\R}{\mathbb{R}}
\newcommand{\Cb}{\mathbb{C}}
\newcommand{\Z}{\mathbb{Z}}
\newcommand{\ti}{\tilde}

\newcommand{\De}{\Delta}
\newcommand{\<}{\langle}
\newcommand{\ra}{\rangle}
\newcommand{\Sk}{\operatorname{Sk}}
\newcommand{\Skdir}{\Sk_{\fam0 dir}}
\newcommand{\Skud}{\Sk_{\fam0 ud}}

\def\inserthyphen{\ifcat\next a-\fi\ignorespaces}
\def\pblack-{$\bullet$\futurelet\next\inserthyphen}
\def\pwhite-{$\circ$\futurelet\next\inserthyphen}
\def\pcross-{$\vcenter{\hbox{$\scriptstyle\times$}}$\futurelet\next\inserthyphen}
\def\black{\protect\pblack}
\def\white{\protect\pwhite}
\def\cross{\protect\pcross}
 \section{Introduction}
In the author's paper \cite[Theorem 2]{Z99} it was asserted that {\it a non-singular curve of bidegree} (4,3)
{\it on a hyperboloid is determined up to a rigid
	isotopy by its complex scheme}. To prove this, it was used the approach proposed in \cite{DIK00}
for obtaining a rigid isotopic classification of plane real quintics. The proof is based on Theorem 1 of \cite{Z99}, which specifies all the connected components of the space
of bidegree (4,3) curves that have a unique non-degenerate double point or a
cusp. In the present paper, this Theorem 1 is proved in a different way -- using graphs of real trigonal curves on the Hirzebruch surface $\Sigma_3$ obtained from singular curves of bidegree (4,3). The proof of this theorem was started by the author in \cite{Z99}; in \cite{Z03} a gap in this proof was pointed out, which is eliminated in the present paper.

The structure of the paper: Section \ref{hyp} contains the necessary information on the topology of real algebraic curves, available in \cite{R}, \cite{V}, and in particular, of curves on a hyperboloid (see \cite{Z91}). Section \ref{trig}, following the papers \cite{DIK08} and \cite{Z21}, recalls the concepts associated with real trigonal curves. Section \ref{Nagata} establishes the connection between curves on a hyperboloid and trigonal curves. In section \ref{deg9} it is filled the gap indicated above, which consists in the absence of a proof of the fact that  each of the classes of singular curves $\omega^{\pm}_{inn}$, $\alpha^{\pm}_{lp}<l>, 0\leq l\leq5,$ (in the notation of works \cite{Z99}, \cite{Z03}) on the hyperboloid is connected. The same arguments give a proof of the uniqueness of the remaining classes of singular curves.
\section{Definitions and notation}\label{hyp}
\emph{A real algebraic variety}
is a complex algebraic variety~$V$ with
an antiholomorphic involution $c = c_V: V \rightarrow V$.
The set of fixed points ${\R} V = \mathrm{Fix}\, c$ is called the \emph{real part} of the variety~$V$. A regular morphism $f:V \rightarrow W$ of two real manifolds
is called \emph{real} or \emph{equivariant}
if $f\circ c_V=c_W\circ f$

A hyperboloid $X$ is understood as a real non-singular quadric with complex part $\Cb X=\Cb P^1\times \Cb P^1$, an antiholomorphic
involution ${c_X}$ that preserves the product, and real part $\R X=\R P^1\times \R P^1$.

Fix a pair $P_1, P_2$ of generators of the hyperboloid $X$. The fundamental classes
$[\Cb P_1]$, $[\Cb P_2]$ form a basis of the group $H_2(\Cb X)\cong \Z \oplus \Z$.
Let $A$ be an algebraic curve on $X$. Then $[\Cb A]=
m_1 [\Cb P_1]+m_2 [\Cb P_2]$ for some non-negative integers $m_1$, $m_2$.
The pair $(m_1,m_2)$ is called the {\it bidegree} of $A$. If $[x_0:x_1]$,
$[y_0:y_1]$ are homogeneous coordinates on the lines $P_1$, $P_2$, then the curve $A$
is defined by a bihomogeneous polynomial
$$F(x_0,x_1;y_0,y_1)=\sum_{i,j=1}^{m_1,m_2} a_{i,j}x_1^i x_0^{m_1-i}y_1^j y_0
^{m_2-j}, $$
having the degrees of homogeneity $m_1$ in $x_0, x_1$ and $m_2$ in $y_0, y_1$. The curve $A$ is real
if and only if all $a_{i,j}$ are real.

To specify the topology of a real curve on a hyperboloid, we use a modification of the standard encoding of real schemes of plane projective curves (see,
e.g., \cite{V}).
Let $A\subset X$ be a nonsingular
real algebraic curve. The real part $\R A$ may have components of two types: contractible in $\R X$ and non-contractible; contractible components
are called {\it ovals}. The number of ovals is denoted by $l$, the number of
non-contractible components by $h$. Each oval bounds a topological
disk in $\R X$ called the {\it interior} of the oval. Fundamental classes $[\R P_1]$, $[\R P_2]$,
endowed with some (fixed) orientations, form a basis of the group
$H_1(\R X)\cong
\Z \oplus \Z$. All non-contractible components $N_1,...,N_h$ realize the same
non-zero class $(c_1,c_2)$ in $H_1(\R X)$, where $c_1$, $c_2$ are coprime. The real scheme of the curve $\R A\subset \R X$ is encoded as follows:
$\langle (c_1,c_2), {\rm scheme}_1,(c_1,c_2), {\rm scheme}_2,...,(c_1,c_2), {\rm scheme}_h
\rangle,$
where ${\rm scheme}_1,..., {\rm scheme}_h$ are the arrangement schemes of the ovals lying in the connected
components of the surface
$\R X\setminus (N_1\cup ... \cup N_h)$ (cf. \cite{V}, \cite{Z91}).

According to F. Klein (see \cite{Kl} or \cite{R}) a real curve $A$
belongs to {\it type} I
or {\it type} II depending on whether $\R A$ divides the complexification
$\Cb A$ or not. If $A$ belongs to type I, the natural orientations of the components
$U$ and $V$ of the space $\Cb A\setminus \R A$
give two opposite orientations of the curve $\R A= \partial U= \partial V$;
these are called {\it complex orientations}. A real scheme endowed with
a type and, in the case of type I, complex orientations, is called the {\it
	complex scheme}.
If one wishes to specify the type of a curve with a real scheme
$\<B\ra$, one uses the notation $\<B\ra_{\rm I}$ and
$\<B\ra_{\rm II}$.

All real curves of bidegree $(m,n)$ form a space
$C_{m,n}\cong \R P ^N$
with $N=mn+m+n$. The set $\De \subset C_{m,n}$ of singular curves has the dimension
$N-1$. By $S\subset \De$ we denote the subset of curves that have a singular
point other than a non-degenerate double point or cusp, or
have several singular points. The set $\De \setminus S$ is a topological manifold (though not a smooth submanifold
of
$C_{m,n}$). {\it A rigid isotopic class} of a curve
$A\in C_{m,n}\setminus \De$ (or $A\in \De \setminus S$) is a component
of the space $C_{m,n}\setminus \De$ (resp. $\De \setminus S$) containing $A$.
The components of the space $C_{m,n}\setminus \De$ (resp. $\De \setminus S$) are called
{\it chambers (walls)}.
\section{Hirzebruch surfaces and Nagata transformations}\label{Nagata}
A Hirzebruch surface $\Sigma_k, k\geq0$ is the space of the line bundle $q:\Sigma_k\rightarrow P^1$, i.e. a real rational ruled surface with exceptional real section $E_k$, $E_k^2=-k$. The fibers of the bundle~$q$ are called
\emph{vertical}, for example, we speak of vertical tangents, vertical
flexes, etc. The surface $\Sigma_0=P^1\times P^1$ is a hyperboloid in which any curve of bidegree $(0,1)$ can be taken as an exceptional section. When studying the walls of the space $C_{m,n}$, the exceptional section is considered to be a curve of bidegree $(0,1)$ passing through a singular point of the curve from the wall.

\label{Nag}
\begin{definition}
	\emph{A positive (negative) Nagata transformation} (see \cite[\S\,2 (3)]{N}) is a fiberwise birational transformation of $\Sigma_k\rightarrow\Sigma_{k+ 1}$ (respectively, $\Sigma_k\rightarrow\Sigma_{k-1}$),
	consisting of blowing up a point $p\in E_k$ (respectively, $p\notin E_k$) and then contracting it to a point of the proper transform
	of the fiber $q^{-1}(p)$.
\end{definition}
A positive (negative) Nagata transformation takes $E_k$ to $E_{k+1}$ (respectively, to $E_{k-1}$). 
 
 \section{Real trigonal curves}\label{trig} 

\emph{A trigonal curve} is a reduced curve $C\subset\Sigma_k$ that does not contain the exceptional section or a fiber as a component and such that the restriction $q|_C$ is a mapping of degree three. A trigonal curve $C$ is called \emph{proper} if $C\cap E_k=\varnothing$.

A fiber $F$ of the surface $\Sigma_k$ with a trigonal curve $C\in\Sigma_k$  is called \emph{singular} if $F$ intersects $C\cup E_k$ geometrically in fewer than four points.

According to \cite[p. 3.1.1]{Degt} for a proper trigonal curve $C$ on the surface
$\Sigma_k, k\geq1,$ there exists an affine chart $(x,y)$ on this surface in which the exceptional section $E_k$ and the curve $C$ are defined by the equation $y=\infty$  and by the  \emph{Weierstrass equation}

$$\,y^3+b(x)y+w(x)=0,$$
where $b, w$ are real polynomials, $\deg b \leq 2k, \deg w \leq 3k$. We can extend the chart to $\Sigma_k\setminus E_k$, assuming that $b, w$ are homogeneous polynomials of degrees $2k, 3k$.


A non-singular proper real trigonal curve $ C $ is called \emph{almost generic}
if all critical points of the restriction $ q|_C $, i.e. all roots of the discriminant
$ \Delta (x) = 4b^3 + 27w^2 $, are simple, and \emph{maximally inflected} if all roots of the discriminant are real (and not necessarily simple).

A real proper trigonal curve $ C $ (possibly singular) is called
\emph{hyperbolic}
if the restriction $ { \R } C \rightarrow { \R } P^1 $ of the map~$ q $ is a three-sheeted covering.

For a real trigonal curve $ C $, the rational function $ j = j_C = \frac {4b^3} { \Delta } = 1 - \frac { 27w^2} { \Delta } $ is called the \emph{ $ j $-invariant} of this curve. A curve with a constant $j$-invariant is called \emph{isotrivial}.
An almost generic curve is called \emph{generic} if for each real critical value $t$ of its $j$-invariant the multiplicity of each root of the equation ~$j(x)=t$ is equal to ~$3$ for $t=0$, is equal to ~$2$ for $t\neq 0$ and all these roots are real for $ t\notin \{0,1\}$. Any almost generic trigonal curve can be transformed into a generic one by a small change in the coefficients of the curve equation.

The real part of a nonsingular non-hyperbolic curve~$C$ has a unique
\emph{long component} $l$, characterized by the fact that the restriction $l \rightarrow {\R} P^1$ of the mapping~$q$
has degree~$\pm 1$. For all other components of~${\R} C$,
called \emph{ovals}, this degree is~$0$. Let $Z \subset {\R} P^1$ be a set of points
with more than one preimage in ${\R} C$. Each oval is mapped by $q$
to a whole component of~$Z$, which is also called an
\emph{oval}. The remaining components of~$Z$,
as well as
their preimages
in~$l$, are called \emph{zigzags}.

By a \emph{deformation} of a trigonal curve
$C\subset\Sigma_k$ we mean a deformation
of the pair $(q:\Sigma_k\rightarrow P^1, C)$
in the sense of Kodaira-Spencer.
A deformation of an almost generic trigonal curve
is called \emph{fiberwise} if the curve remains
almost generic throughout the deformation.

\emph{Deformation equivalence} of real trigonal curves is defined by the equivalence relation generated by equivariant fiberwise deformations and real isomorphisms.

\subsection{Special fibers and Nagata transformations}
To obtain a proper trigonal curve on $\Sigma_3$ from a singular curve $C\in\De \setminus S$ of bidegree $(4,3)$ on a hyperboloid using the Nagata transformation, consider a chart $(x,y)$ on it, where the exceptional section $E_0$ and the singular fiber $F$ are given by the equations $y=\infty$ and $x=0$. Then the positive Nagata transformation $N$ centered at the point $p=F\cap C\cap E_0$ is given by the equality $(x,z)=(x,xy)$. The curve $C$ and its image $N(C)$ are given by local equations with only the necessary initial terms indicated. We need the Nagata transformations of the following
singular fibers (the fiber of the curve $N(C)$ is designated according to \cite[3.1.2]{Degt}):
\begin{enumerate}\label{list}
	\item $p$ is a non-degenerate double point at which the curve $C$ is tangent to neither $F$ nor $E_0$. $C: x^2y^3+y+1=0$, $N(C): z^3+z+x=0$, fiber $\tilde{A}_0$;
	\item $p$ is a non-degenerate double point at which the curve $C$ is tangent to the fiber $F$ and not tangent to $E_0$. $C: x^2y^3+xy^2+1=0$, $N(C): z^3+z^2+x=0$, fiber $\tilde{A}_0^*$;
	\item $p$ is a non-degenerate double point where $C$ is tangent to $E_0$ and not tangent to $F$. $C: x^3y^3+xy^2+y+1=0$, $N^2(C): z^3+z^2+xz+x^3=0$, fiber $\tilde{A}_1$;
	\item $p$ is a non-degenerate double point where $C$ is tangent to both $E_0$ and $F$. $C: x^3y^3+xy^2+1=0$, $N^2(C): z^3+z^2+xz+x^3=0$, fiber $\tilde{A}_2$;
	\item $p$ is a cusp where the tangent line coincides with neither $E_0$ nor $F$.
	$C: x^2y^3+2xy^2+y+x^2y^2+1=0$, $N(C): z^3+2z^2+z+xz^2+x=0$, fiber $\tilde{A}_0^*$;
	\item $p$ is a cusp with vertical tangent. $C: x^2y^3+1=0$, $N(C): z^3+x=0$, fiber $\tilde{A}_0^{**}$;
	\item $p$ is a cusp with horizontal tangent. $C: x^3y^3+y+1=0$, $N^2(C): z^3+xz+x^3=0$, fiber $\tilde{A}_1^{*}$;
	\item $p$ is a non-singular point of $C$ with a non-vertical and non-horizontal tangent, and $F$ intersects $C$ in three distinct points. $C: xy^3+y^2+1=0$, $N(C): z^3+z^2+x^2=0$, fiber $\tilde{A}_1$;
	\item $p$ is a non-singular point of $C$ with a non-vertical and non-horizontal tangent, and $F$ is tangent to $C$ (at a point different from $p$). $C: xy^3+y^2+x=0$, $N(C): z^3+z^2+x^3=0$, fiber $\tilde{A}_2$;
	\item $p$ is a non-singular point of $C$ with a vertical tangent, and $F$ intersects $C$ at a point different from $p$. $C: xy^3+y+x=0$, $N(C): z^3+xz+x^3=0$, fiber $\tilde{A}_1^*$;
	\item $p$ is an inflection point of $C$ with vertical tangent. $C: xy^3+1=0$, $N(C): z^3+x^2=0$, fiber $\tilde{A}_2^*$.
	
\end{enumerate}

\subsection{The graph of a trigonal curve}\label{lift}
 
Further we shall need graphs on the disk (\emph{dessins}, as special cases of trichotomic graphs, in the terminology of \cite[Section 5]{DIK08}, 
\cite{JP}), 
isomorphic to the graphs of real trigonal curves (see the definition in the next paragraph). According to \cite{DIK08}, any such graph is the graph of some trigonal curve. Unless otherwise stated, everywhere below the \emph{graph} is the graph of a real trigonal curve.

Let $ D $ be the disk obtained by factorizing the complex projective line $\Cb P^1 $ by complex conjugation, and $\mathrm{pr}: \Cb P^1 \rightarrow D$ be the projection. Points, segments, etc., lying
on the boundary circle~$\partial D$ are called \emph{real}.
For $ j $-invariant $j_C:\Cb P^1\rightarrow \Cb P^1= \Cb\cup\{\infty\}$ of a non-isotrivial real trigonal curve $ C\subset\Sigma_k $ we endow the line $ \R P^1$ lying in the image of this function with the orientation determined by the order in $\R$ and color it as follows: let $0$, $1$,
and~$\infty$ be, respectively, the \black--, \white--, and
\cross-- vertex; $(\infty,0)$, $(0,1)$, and $(1,\infty)$ be, respectively, the \rm{solid}, \rm{bold}, and \rm{dotted} edge.
Lift this orientation and coloring to the graph $\Gamma_C=\mathrm{pr}(j_C^{-1}({\R} P^1))$, obtaining the \emph{graph of a real trigonal curve} $C$. Its \black--, \white--, and
\cross--vertices, which are branch points (critical points of the $j$-invariant) with critical values $ 0,1$ and $\infty$, are called \emph{essential}, the remaining vertices, which are
branch points with real critical values different fro$0,1,\infty$, are called \emph{monochrome}. 
	Monochrome vertices are
	classified as solid, bold, and dotted, according to the edges that adjoin them.
	
	A \emph{monochrome cycle} in~$\Gamma_C$ is a cycle all of whose vertices
	are monochrome; hence, all of its edges and vertices are of the same color.
	The definition of the graph~$\Gamma_C$ implies
	that it has no directed monochrome cycles.
	
	The \emph{degree} of $\Gamma_C$ is~$3k$. A graph of degree~$3$ is called \emph{cubic}.
	
	In the figures, the \emph{real part} $ \partial D \cap\Gamma $ of the graph $ \Gamma $ and its subsets
	are denoted by wide
	gray lines.
	
	For a graph
	$\Gamma\subset D$, 
	the closures of the connected components of the set~$D\setminus\Gamma$
	are called \emph{regions} of~$\Gamma$. A region with three essential vertices on its
	boundary is called \emph{triangular}.
	
	The graph of a curve is called
	\emph{unramified} if all its \cross--vertices
	are real.
	In other words, unramified graphs are those corresponding to
	maximally inflected curves.
\subsection{Pillars}\label{pillar}
A graph $\Gamma$
is called
\emph{hyperbolic} if all its real edges are dotted. It corresponds to a hyperbolic curve.

The union of closures of some identically colored real edges of $\Gamma$ is called a \emph{segment} if it is
homeomorphic to a segment.
A dotted (bold)
segment
is called \emph{maximal} if its endpoints are
two
\cross--vertices
(respectively, two \black--vertices).
A \emph{dotted}/\emph{bold} \emph{pillar} is a
maximal dotted/bold segment.

Ovals and zigzags of a non-hyperbolic graph $\Gamma$ are depicted in $\partial D$
by dotted pillars that contain an even and an odd number of \white--vertices, respectively.

A bold pillar
with an even/odd number of
\white--vertices
is called a
\emph{wave/jump}.

\subsection{Elementary moves of graphs}\label{graphmodif}

Two graphs are said to be
\emph{equivalent} if, up to a homeomorphism $f$
of the disk $D$, they can be connected by
a finite sequence of
isotopies and the following \emph{elementary moves}:
\begin{itemize}
	\item[--]
	\emph{monochrome modification}, see
	\ref{fig.moves}(a);
	\item[--]
	\emph{creation} (\emph{destroying}) \emph{a bridge}, see
	\ref{fig.moves}(b),
	where a \emph{bridge} is a pair of
	monochrome vertices connected by a real monochrome edge;
	\item[--]
	\emph{\white-in} and its inverse \emph{\white-out}, see
	\ref{fig.moves}(c) and~(d);
	\item[--]
	\emph{\black-in} and its inverse \emph{\black-out}, see
	\ref{fig.moves}(e) and~(f).
\end{itemize}
(In the first two cases, a move
is considered possible only if it results
in a graph without directed monochrome cycles.)
An equivalence of two graphs is called \emph{bounded} if the homeomorphism $f={\rm id}_D$ and the above isotopies preserve the pillars as sets.

\begin{figure}[tb]
	\begin{center}
		\includegraphics{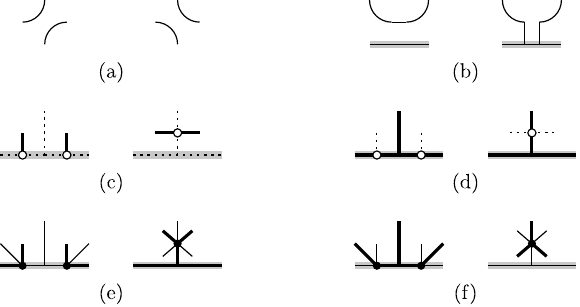}\\
	\end{center}
	\caption{Elementary moves of graphs}\label{fig.moves}
\end{figure}

The following assertion is proved in the same way as the similar assertion~\cite[5.3]{DIK08}.

\begin{theorem} \label{equiv.curves}
	Two generic real trigonal curves are deformation	
	equivalent
	\emph{(}in
	the
	class
	of
	almost generic
	real trigonal curves\emph{)}
	if and only if their graphs are equivalent.
\end{theorem}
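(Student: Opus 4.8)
The plan is to reproduce, with only the bookkeeping changes needed to pass from cubic graphs to graphs of degree $3k$ (here $k=3$), the proof of the analogous assertion \cite[5.3]{DIK08}. The assignment $C\mapsto\Gamma_C$ is to be shown to induce a bijection between the set of fiberwise deformation classes of generic real trigonal curves and the set of equivalence classes of graphs; since every graph is $\Gamma_C$ for some trigonal curve $C$ (recalled above), this bijection amounts to the asserted equivalence, which is proved below in the two directions.

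\emph{Necessity.} Let $\{C_t\}_{t\in[0,1]}$ be a fiberwise deformation, so the discriminant $\Delta_t$ has only simple roots for every $t$. On the set of parameters for which $C_t$ is \emph{generic} the combinatorial type of $\Gamma_{C_t}$ is locally constant and the graph moves by an ambient isotopy of $D$. After a small perturbation of the path keeping its (generic) endpoints fixed, one may assume that $C_t$ fails to be generic only for finitely many values of $t$ and that at each such value the $j$-invariant undergoes precisely one of the codimension-one degenerations that realize a monochrome modification, a creation or destruction of a bridge, a \white-in or \white-out, or a \black-in or \black-out; that this is the complete list of codimension-one transitions occurring inside the almost generic locus is verified locally on $P^1$ exactly as in \cite{DIK08}. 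Hence $\Gamma_{C_0}$ and $\Gamma_{C_1}$ are joined by isotopies and elementary moves, i.e. are equivalent. A real isomorphism of trigonal curves induces a homeomorphism of $D$ taking one graph onto the other; therefore deformation equivalent curves have equivalent graphs.

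\emph{Sufficiency.} Two facts are needed. First, each elementary move is realized by a fiberwise deformation: for every move one exhibits a local one-parameter family of Weierstrass data $(b_s,w_s)$, supported near the relevant point of the base, whose $j$-invariant performs exactly the corresponding transition of $\Gamma$, and one checks that $\Delta_s=4b_s^3+27w_s^2$ keeps only simple roots; since a point of a Weierstrass curve is singular only over a multiple root of $\Delta$, the curve then stays almost generic throughout, including at the wall. This is the local computation carried over from \cite{DIK08}. Second, the graph determines the curve up to fiberwise deformation: for a fixed graph $\Gamma$, all generic (indeed all almost generic) curves with graph $\Gamma$ are fiberwise deformation equivalent. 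Here one uses the real Riemann existence theorem for trichotomic graphs from \cite{DIK08}, by which the real rational functions realizing $\Gamma$ form a connected family in which the $j$-invariant can be deformed keeping $\Gamma$ --- hence the root pattern of $\Delta$, hence almost genericity --- fixed; and, over a fixed (necessarily non-isotrivial) $j$, the Weierstrass pair $(b,w)$ on $\Sigma_3$ is determined up to the substitutions $(b,w)\mapsto(t^{2}b,t^{3}w)$, $t\in\R^{*}$, and $w\mapsto-w$, which are induced by the real isomorphisms $y\mapsto t^{-1}y$ and $y\mapsto-y$ and so do not alter the deformation class. Given these two facts, if $\Gamma_{C_0}$ and $\Gamma_{C_1}$ are equivalent one writes a chain of isotopies and elementary moves connecting them, realizes each move by a fiberwise deformation, and uses the second fact to splice the endpoints of these deformations to the prescribed intermediate curves; the resulting chain is a deformation equivalence between $C_0$ and $C_1$ that passes through almost generic --- but not necessarily generic --- curves at the walls, which is why the statement is phrased for deformation equivalence in the class of almost generic curves.

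\emph{Main obstacle.} The crux is the interplay, in both directions, between the combinatorics of the graph and the geometry of the base: one must check, move by move and locally on $P^1$, that the four elementary moves are exactly the codimension-one transitions of $\Gamma$ realized by fiberwise deformations, that each is realized by a family of Weierstrass data with discriminant having only simple roots, and that the prohibition of directed monochrome cycles is preserved. The remaining ingredients --- the Hurwitz-type connectedness of the space of real rational functions with a given graph and the reconstruction of $(b,w)$ from $(j,\Delta)$ --- are a routine transcription of \cite{DIK08}, as is the passage, within a single deformation class, between the generic stratum of $\Sigma_3$ and its walls.
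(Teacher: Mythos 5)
Your proposal is correct and follows essentially the same route as the paper, which proves this theorem simply by deferring to the argument of \cite[5.3]{DIK08}; your write-up is a faithful expansion of exactly that argument (codimension-one transitions of the $j$-invariant realizing the elementary moves in one direction, realization of each move by Weierstrass data with simple discriminant roots plus the connectedness of the space of curves with a fixed graph in the other). No discrepancy with the paper's intended proof.
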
 

\subsection{Rigid isotopies and weak equivalence}\label{s.rigid}
The rigid isotopy differs from the deformation equivalence by an additional pair of mutually inverse operations:
straightening/creating a zigzag, the former consisting in
merging two vertical tangents bounding the zigzag into a single vertical flex, followed by pulling them apart
to the imaginary domain. 
At the graph level,
these operations are shown in \ref{fig.zigzag}.

\begin{definition}
	Two graphs are called \emph{weakly equivalent} if they
	are related by a sequence of isotopies,
	elementary moves (see~\ref{graphmodif})
	and the operations of
	\emph{straightening/\penalty0creating a zigzag}
	consisting in replacing one of the fragments shown in
	\ref{fig.zigzag} with another.
\end{definition}

\begin{figure}[tb]
	\begin{center}
		\includegraphics{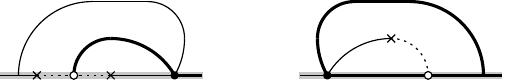}\\
	\end{center}
	\caption{Straightening/Creating a Zigzag}\label{fig.zigzag}
\end{figure}
The following proposition is easy to deduce from~\cite{DIK08}.

	Two generic real trigonal curves are rigidly isotopic.
	if and only if their graphs are weakly equivalent.

As mentioned in \cite{DIZ}, the following theorem
can be deduced, for example, from
Propositions~5.5.3 and~5.6.4 in~\cite{DIK08},
see also~\cite{Z06}.
\begin{theorem} \label{max.inflected}
	Any non-hyperbolic nonsingular
	real trigonal curve
	on the Hirzebruch surface
	is rigidly isotopic to a maximally inflected one.
\end{theorem}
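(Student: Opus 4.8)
The plan is to move the problem to the combinatorics of graphs and then to quote the reduction results of~\cite{DIK08}. First I would make $C$ generic: nonsingularity and non-hyperbolicity are open conditions on the coefficients of a trigonal curve, whereas isotriviality, the presence of a non-simple critical point of $q|_C$, and any violation of the extra genericity conditions on the real critical values of $j_C$ are conditions of positive codimension, so a small perturbation of the equation of $C$ --- a path inside the space of nonsingular non-hyperbolic trigonal curves, hence a rigid isotopy --- makes $C$ generic (and non-isotrivial). By the definition in Subsection~\ref{lift}, a nonsingular trigonal curve is maximally inflected exactly when its graph is unramified; moreover, by~\cite{DIK08} every unramified graph of a trigonal curve is the graph of some nonsingular, hence maximally inflected, curve, which after a further small perturbation inside nonsingular curves may be taken generic (and still maximally inflected). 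Using the proposition that precedes Theorem~\ref{max.inflected} (two generic real trigonal curves are rigidly isotopic if and only if their graphs are weakly equivalent), it therefore suffices to prove that the graph of every non-hyperbolic generic real trigonal curve is weakly equivalent to an unramified graph.

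For the graph statement, recall that the \cross--vertices of $\Gamma_C$ are the critical points of the projection $q|_C$: the real ones lie on $\partial D$ as the endpoints of dotted pillars, while the imaginary ones come in conjugate pairs inside $D$. The key observation is that \emph{creating a zigzag} (Figure~\ref{fig.zigzag}) is precisely the operation that replaces a conjugate pair of imaginary \cross--vertices lying in a dotted region attached to $\partial D$ by a pair of real \cross--vertices bounding a new zigzag on $\partial D$. I would induct on the number of such conjugate pairs: if it is $0$ the graph is already unramified; otherwise, since $C$ is non-hyperbolic, $\Gamma_C$ has a non-dotted real edge, hence a \black--vertex or a \white--vertex on $\partial D$ (the long component of $\R C$), and this leaves enough room to use monochrome modifications, creations and destructions of bridges, and the moves \white--in, \white--out, \black--in, \black--out to bring an innermost imaginary pair into a dotted region touching $\partial D$, where a creation of a zigzag then absorbs it into the boundary, lowering the count by one while keeping every intermediate graph free of directed monochrome cycles. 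This is exactly the mechanism underlying Propositions~5.5.3 and~5.6.4 of~\cite{DIK08} (see also~\cite{Z06}); iterating it yields an unramified graph weakly equivalent to $\Gamma_C$, which proves the graph statement and hence the theorem.

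The part I expect to be the main obstacle is verifying that an innermost imaginary conjugate pair can always be moved into a dotted region touching $\partial D$, and that every intermediate graph stays admissible (no directed monochrome cycles, and the zigzag fragment of Figure~\ref{fig.zigzag} genuinely available); this is exactly where non-hyperbolicity is indispensable. For a hyperbolic graph every real edge is dotted, $\R C\to\R P^1$ is an honest three-sheeted covering, $\partial D$ carries no \black--vertex or \white--vertex, and there is no channel to open --- indeed a maximally inflected curve can never be hyperbolic, because a real root of the discriminant produces a fiber meeting $\R C$ in fewer than three real points. Once this single-pair reduction step is secured, the induction, the realizability of the terminal unramified graph, and the bookkeeping with the degree $3k$ of the graph are all routine. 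In the situation of the present paper $k=3$ and the relevant graphs have degree $9$, so one could alternatively confirm the statement by examining directly the finitely many non-hyperbolic degree-$9$ graphs; the general form is, however, most economically deduced from~\cite{DIK08}, as the text indicates.
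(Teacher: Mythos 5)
The paper gives no proof of this theorem at all: it merely records that the statement can be deduced from Propositions~5.5.3 and~5.6.4 of~\cite{DIK08} (see also~\cite{Z06}), and your argument delegates its one nontrivial step --- absorbing an innermost conjugate pair of imaginary \cross--vertices into the boundary via creation of a zigzag --- to exactly those same propositions. So your proposal is correct at the same level of rigor and takes essentially the same approach as the paper; the surrounding reductions you supply (perturbation to a generic curve, translation to weak equivalence of graphs, induction on the number of imaginary pairs) are consistent scaffolding for that citation rather than a genuinely different route.
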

In the framework of weak equivalence of graphs, we need the following two transformations that reduce the number of real \white--vertices of a graph:
\begin{enumerate}
	\item removal of adjacent jumps and zigzags, consisting of straightening the zigzag followed by \white-- and \black-in, and the inverse transformation (see \ref{JZ});
	\item removal of a pair of adjacent zigzags, consisting of \black-out, and straightening the two zigzags followed by \white-- and \black-in, and the inverse transformation (see \ref{ZZ}).
\end{enumerate}
\begin{figure}
	\begin{center}
		\includegraphics{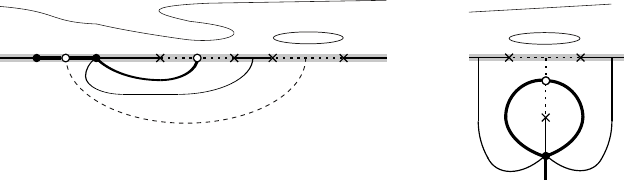}\\
	\end{center}
	\caption{Removing/creating adjacent jump and zigzag}\label{JZ}
\end{figure}
\begin{figure}
	\begin{center}
		\includegraphics{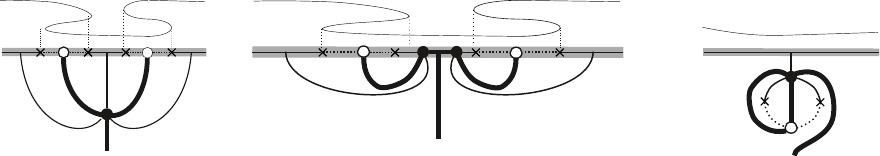}\\
	\end{center}
	\caption{Removing/creating a pair of adjacent zigzags}\label{ZZ}
\end{figure}

 \subsection{Singular curves}
 A non-isotrivial trigonal curve is called \emph{nodal-cuspidal} if all roots of its discriminant $\Delta$ have multiplicity at most three.
 
 Theorem \ref{equiv.curves} extends to real nodal-cuspidal curves and their graphs
 (see \cite[Theorem 4.26]{Degt}).
 
 It is easy to check that  \cite[Theorem 3]{Z21} extends to real nodal-cuspidal curves that have no imaginary singularities:
 \begin{theorem} \label{s.max.inflected}
 	Any real nodal-cuspidal non-hyperbolic curve without imaginary singularities is rigidly isotopic to a maximally inflected curve.
 \end{theorem}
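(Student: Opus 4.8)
The plan is to transfer the statement to the language of graphs and then essentially repeat the proof of the nonsingular analogue \cite[Theorem~3]{Z21} (cf.\ \cite[Prop.~5.5.3]{DIK08}, \cite{Z06} and Theorem~\ref{max.inflected}), the only new issue being the presence of (real) singular fibers. First I would note that, since Theorem~\ref{equiv.curves} and the proposition relating weak equivalence of graphs with rigid isotopy both extend to nodal-cuspidal curves by \cite[Theorem~4.26]{Degt}, rigid isotopy classes of nodal-cuspidal non-hyperbolic real trigonal curves without imaginary singularities correspond bijectively to weak equivalence classes of their graphs, the graph now carrying the marking of the special fibers. Since a curve is maximally inflected exactly when its graph is unramified, i.e.\ all \cross--vertices are real, it suffices to prove that the graph $\Gamma_C$ of such a curve $C$ is weakly equivalent to an unramified graph.

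Here the reality of all singular points of $C$ is what makes the argument go through: every multiple root of the discriminant $\Delta$ is then real, so the only imaginary roots of $\Delta$ come from imaginary vertical tangents (simple roots) and imaginary vertical flexes (double roots at smooth points of $C$), and the marked special fibers all lie on $\partial D$. First, perturbing each imaginary vertical flex into a pair of imaginary vertical tangents --- a rigid isotopy, since $C$ stays smooth at that fiber --- one reduces to the case where every imaginary root of $\Delta$ is simple. Then one argues by induction on the number of imaginary ramification points of $q|_C$: if it is $0$, the graph is already unramified; otherwise one eliminates a conjugate pair of imaginary vertical tangents by the ``creating a zigzag'' operation exactly as in \cite[Theorem~3]{Z21} --- geometrically, the pair is brought together at a real point, where $C$ acquires a vertical flex but no new singular point, and is then split into a real zigzag --- which decreases the count by two. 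As in the nonsingular case, the monochrome modifications and bridge moves needed to bring the chosen \cross--vertices into position are available because $\Gamma_C$ is non-hyperbolic, so that $\partial D\cap\Gamma_C$ is not entirely dotted. Each step keeps $C$ nodal-cuspidal, non-hyperbolic (the new zigzag adds a dotted pillar, but the non-dotted real edges survive) and free of imaginary singularities; after finitely many steps one obtains an unramified graph weakly equivalent to $\Gamma_C$, i.e.\ a maximally inflected curve rigidly isotopic to $C$.

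The hard part --- what is hidden behind the phrase ``it is easy to check'' --- is the verification that the reduction of \cite[Theorem~3]{Z21} really goes through unchanged once the marked special fibers are present: one must check that the monochrome modifications and bridge moves freeing an imaginary \cross--vertex never need to cross a boundary fragment modelling a singular fiber, and that the induction cannot stall with imaginary \cross--vertices ``trapped'' among such fibers. Since all singularities of $C$ are real, each special fiber is an isolated feature of $\partial D$ with one of the standard local models (cf.\ \cite[3.1.2]{Degt} and the list of singular fibers in Section~\ref{trig}); excising a small disk around each of them leaves exactly the situation handled in \cite[Theorem~3]{Z21}, and non-hyperbolicity guarantees that this remaining part of $D$ still contains the non-dotted boundary arc that the nonsingular argument uses. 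One should also record, at every elementary move and every zigzag creation, that the multiplicities of the roots of $\Delta$ remain bounded by those already present and that no new real or imaginary singular point of $C$ is created, so that the whole construction stays within the class of nodal-cuspidal non-hyperbolic curves without imaginary singularities.
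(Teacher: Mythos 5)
Your plan follows essentially the same route as the paper: the paper offers no written argument for this theorem beyond the assertion that \cite[Theorem 3]{Z21} extends to real nodal-cuspidal curves without imaginary singularities, and your proposal is precisely the natural elaboration of that extension --- since all singular fibers are real they sit on $\partial D$, so the imaginary part of the graph is exactly as in the nonsingular case and the inductive elimination of imaginary \cross--vertices by creating zigzags goes through verbatim. I see no discrepancy with the paper's intended argument and no gap in your outline.
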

 For the graph of a real nodal-cuspidal curve, the vertices   that correspond to singular points of the curve are called \emph{singular}. We include in the number of dotted pillars those real singular \cross--vertices that are adjacent to solid edges, i.e. corresponding to isolated singular points of the curve ("degenerate ovals"). We call such singular vertices \textit{isolated}.
 
 \begin{figure}
 	\begin{center}
 		\scalebox{1.8}{\includegraphics{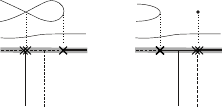}}\\
 	\end{center}
 	\caption{Passage of a nodal point through a cusp}\label{isol}
 \end{figure}
 \begin{proposition}\label{s-oval}
 	On a real segment without \white--vertices, an isolated singular point can be swapped with an oval.
 \end{proposition}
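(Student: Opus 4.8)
The plan is to argue at the level of graphs and to reduce the assertion to the local picture of Figure~\ref{isol}. First I would localise. Since the real segment $S$ carries no \white--vertices, every oval lying in $S$ likewise contains no \white--vertices, so it is a dotted pillar whose two \cross--endpoints are joined, through dotted monochrome vertices only, by dotted edges. Performing creations and destructions of bridges and monochrome modifications away from the fragment of interest, I may assume that near $S$ the graph is a solid arc carrying exactly the isolated singular \cross--vertex $v$ and one such oval $O$, the two joined by a single solid edge, with no other essential or monochrome vertex between them. It then suffices to transport $v$ across $O$.

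Next I would isolate the basic move, the one of Figure~\ref{isol}: on a solid edge an isolated singular \cross--vertex can be carried past an adjacent \cross--vertex of an oval, the two momentarily coalescing into a \cross--vertex of cuspidal type before being pulled apart on the other side (``passage of a nodal point through a cusp''). Applying this, I carry $v$ past the near endpoint $p_1$ of $O$ onto a dotted edge of $O$; I then carry $v$ past each interior dotted monochrome vertex of $O$, which is a monochrome modification of the dotted edge issuing from $v$ into the interior of $D$; and finally I carry $v$ past the far endpoint $p_2$ by the move of Figure~\ref{isol} again. Here one uses that Theorem~\ref{equiv.curves}, the elementary moves of~\ref{graphmodif} and Theorem~\ref{s.max.inflected} all extend to real nodal--cuspidal curves (cf.\ \cite[Theorem 4.26]{Degt}), so that these moves are admissible for the graphs in question, and that none of them --- nor their composition --- creates a directed monochrome cycle.

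Then I would assemble the argument: from the localised configuration, apply the move of Figure~\ref{isol} at $p_1$, a monochrome modification at each interior dotted monochrome vertex of $O$ in turn, and the move of Figure~\ref{isol} at $p_2$; the vertex $v$ now lies on the solid edge beyond $O$, so $v$ and $O$ have been interchanged along $S$. Reinstating the bridges and monochrome vertices removed during localisation yields the interchange in the original graph, and since every step is invertible the result is an equivalence of graphs, as claimed.

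The point requiring the most care is the second step: one must check that the move of Figure~\ref{isol} is genuinely realisable within weak equivalence --- that is, decomposes into permitted elementary moves and zigzag operations on nodal--cuspidal graphs --- and that throughout the process no directed monochrome cycle appears. This is exactly where the hypothesis that $S$ carries no \white--vertices enters: it forces the colouring near $v$ and $O$ to involve only solid and dotted edges, so the dotted edge issuing from $v$ can be dragged across the part of $D$ lying over $O$ without meeting a bold edge or a \white--vertex, either of which would block the passage.
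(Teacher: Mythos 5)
Your reduction step contains a genuine gap. You claim that, by creating/destroying bridges and performing monochrome modifications away from the fragment of interest, the portion of the real segment between the isolated singular vertex $v$ and the oval $O$ can be assumed to carry no essential or monochrome vertices. But bridge moves and monochrome modifications only create or remove \emph{monochrome} vertices; they cannot eliminate the real \black--vertices that may lie on this segment. Such vertices are present whenever a bold pillar meets the segment, and they are precisely the obstruction to applying the move of \ref{isol} directly. This is also where the hypothesis that the segment carries no \white--vertices actually enters --- not where you place it. Since every bold pillar lying on the segment contains no \white--vertices, it is a wave, so the real \black--vertices on the segment occur in pairs joined by a real bold edge, and each such pair can be pushed into the interior of the disk by \black-in (\ref{fig.moves}(e)). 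This is exactly the paper's (two-line) argument. Had a bold pillar on the segment been a jump, i.e.\ contained a \white--vertex, \black-in would not apply and the swap could fail; your stated use of the hypothesis (that the dotted edge from $v$ meets no bold edge while being dragged) does not capture this.

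Once the \black--vertices are removed, the remaining content of the proposition is the single transformation of \ref{isol} (passage of a nodal point through a cusp), which the paper invokes as one admissible move on graphs of nodal--cuspidal curves; your step-by-step transport of $v$ through the dotted monochrome vertices of $O$ is a plausible unpacking of it but is not what carries the proof. So your overall strategy (clean the segment, then apply \ref{isol}) matches the paper's, but the cleaning step as you state it does not go through: you must add the observation that the \black--vertices pair up into waves and are eliminated by \black-in.
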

 \begin{proof}
 	On such a segment, all \black--vertices occur in pairs, which can be removed from the segment using \black-in. After that, the \ref{isol} transformation swaps the isolated singular point and the oval.
 \end{proof}
 \begin{figure}
 	\begin{center}
 		\includegraphics{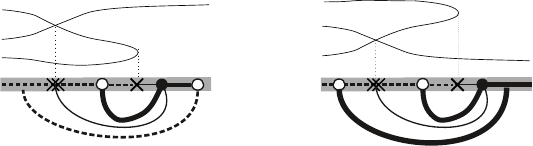}\\
 	\end{center}
 	\caption{ Passage of a nodal point through a point $\tilde{A}_1^*$}\label{A1}
 \end{figure}
 \subsection{Cuts}\label{cuts}
 For $j=1,2$, let $D_j$ be a disk, $\Gamma_j\subset D_j$ be the graph of a nodal-cuspidal curve, $I_j\subset\partial D_j$ be a segment and 
 $\varphi:I_1\to I_2$ be an isomorphism, i.e., a diffeomorphism of segments
 establishing an isomorphism of the graphs
 $\Gamma_1\cap I_1\to\Gamma_2\cap I_2$.
 Consider
 the quotient set $D_{\varphi}=(D_1\sqcup D_2)/\{x\sim\varphi(x)\}$ and the image
 $\Gamma'_{\varphi}\subset D_{\varphi}$ of the graph~$\Gamma_1\cup\Gamma_2$. Denote by~$\Gamma_{\varphi}$
 the graph obtained from~$\Gamma'_{\varphi}$ by deleting the image of the segment
 ~$I_1$ if $\varphi$ changes orientation, and if it does not, then either by transforming
 the images of the endpoints of~$I_1$ into monochrome vertices, or by preserving these endpoints as essential vertices.
 
 In what follows, we always
 assume that $I_j$ is part of an edge of the graph ~$\Gamma_j$, or
 $I_j$ contains one
 \white--vertex, or it ends at singular vertices (and then contains one monochrome vertex).
 
 Up to isotopy, in the second and third cases $\varphi$
 is unique; in the first case, this also requires specifying whether $\varphi$ preserves orientation or reverses it.
 If $\Gamma_{\varphi}$ is a trigonal curve graph, it is called the result of
 \emph{gluing} the graphs $\Gamma_1$, $\Gamma_2$ along~$\varphi$.
 The image of ~$I_1$ is called a \emph{cut}
 in~$\Gamma_{\varphi}$.
 A cut
 is called \emph{genuine} (\emph{artificial}) if $\varphi$ preserves
 (respectively, reverses) orientation; it is called \rm{solid},
 \rm{dotted} or \rm{bold} depending on the structure of the segment
 $\Gamma\cap I_1$.
 (The names \rm{dotted} and \rm{bold} are still applied
 to cuts containing a
 \white--vertex.)
 A \emph{junction} is a genuine cut obtained by gluing two graphs
 along isomorphic
 parts of their zigzags.
 \section{A constructive description	of maximally inflected trigonal curves}\label{S.rational}
 In this section, we give a constructive description of the
 real parts of nonsingular maximally inflected
 trigonal curves.
 
 \subsection{Blocks}\label{blocks}
 \begin{definition}\label{def.block}
 	\emph{A cubic block of type~$I$} is an unramified graph of degree~$3$ and of type~$I$ (see \ref{cubics} I). \emph{A cubic block of type~$II$} is an unramified graph of degree~$3$ and of type~$II$ with an interior \black--vertex (see \ref{cubics} II). Several cubic blocks, artificially glued along segments of solid edges, form a (\emph{general}) \emph{block}. 
 	The block type is the type of the curve corresponding to the block.
 \end{definition}
 \begin{figure}[tb]
 	\begin{center}
 		\includegraphics{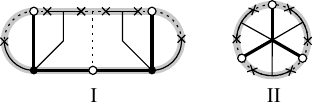}\\
 	\end{center}
 	\caption{Cubic blocks}\label{cubics}
 \end{figure}
 
 According to \cite[5.6.7]{DIK08} and \cite[5.1]{Z21}, both cubic blocks are unique up to isomorphism.
 \begin{remark}\label{replace}
 	From the description of the blocks {\rm \cite[Proposition 8]{Z21}} it immediately follows that the genuine gluing of two blocks along bold segments can be replaced either by a junction of two other blocks composed of parts of the previous blocks, or by artificial gluing along segments of solid edges, i.e. by a new block.
 \end{remark} 
 \subsection{The real parts of maximally	inflected curves}\label{CR}
 A complete description of the real part of a maximally
 inflected nonsingular
 trigonal curve,
 \emph{i.e.} a description of the topology of $\R C\cup s_0$, where $s_0\subset\Sigma_k$ is the zero section, is given in \cite[5.3]{Z21} and, taking into account the remark \ref{replace}, looks as follows:
 the graph of a maximally
 inflected curve is obtained from a disjoint union of blocks using junctions that transform the disks of the blocks into a single
 disk.
 In this case, if all blocks are of type I and all gluings are junctions, the result is a graph of type I, otherwise is a graph of type II.
 \section{Skeletons}\label{S.skeletons}
Here the notion of a skeleton, introduced in \cite{Z21} for maximally inflected trigonal curves, is extended to the case of the graph of a curve obtained from a maximally inflected one by the transformations \ref{JZ} and \ref{ZZ} (see below  the definition of \ref{def.skeleton}) 
\subsection{Abstract skeletons}\label{a.skeletons}
Consider an embedded (finite) graph $\Sk \subset D$ in a
disk $D$. We do not exclude the possibility that
some vertices~$\Sk$ belong to the boundary~$D$;
such vertices are called
\emph{real}, the rest are called \emph{imaginary} or \emph{inner}. The set of edges adjacent to each real 
vertex~$v$ of the graph~$\Sk$ receives a pair of opposite linear
orders from the pair of orientations of the circle~$\partial D$ .

The \emph{Immediate neighbors} of an edge~$e$ at a vertex~$v$ are the immediate
predecessor and successor of this edge with respect to (any) of these orders.
\emph{A first-neighbor path} in~$\Sk$ is a sequence of
directed
edges of~$\Sk$, in which
each edge is followed by one of
its immediate neighbors.

Below we consider graphs with
two types of edges: directed and undirected. We call
such graphs \emph{partially directed}.
The directed and undirected parts (unions of corresponding edges and neighboring vertices) of a partially directed
graph~$\Sk$ are denoted by~$\Skdir$ and~$\Skud$, respectively.

\begin{definition}\label{def.a.skeleton}
	Let~$D$ be a disk.
	An \emph{abstract skeleton} is a partially directed
	embedded graph $\Sk \subset D$,
	disjoint
	from $\partial D$ except for some
	vertices,
	and satisfying the following conditions:
	\newcounter{N5}
	\begin{list}{(\arabic{N5})}{\usecounter{N5}}
		\item\label{Sk.1}
		each vertex is \emph{white}, \emph{black}, or \emph{cross}; the valence of any imaginary white vertex is two, any imaginary black vertex is isolated, any cross vertex is imaginary monovalent and is connected by an incoming edge outgoing from an imaginary white vertex;
		any edge adjacent to a real black vertex (called the \emph{source})
		is outgoing;
		both edges adjacent to an imaginary white vertex are outgoing,
		any isolated black/white vertex belongs to $\Skdir/\Skud$;
		\item\label{Sk.2}
		any immediate neighbor of an incoming
		edge is an outgoing one;
		\item\label{Sk.3}
		$\Sk$ has no first-neighbor cycles;
		\item\label{Sk.4}
		the set of real vertices of~$\Sk$ is non-empty;
		\item\label{Sk.5}
		$b_1+3b=v+z+i$ for each
		region~$R$ of~$\Sk$,
		where $b_1$ is the number of black vertices with a single directed adjacent edge at  $\partial R$, $b$ is the number of isolated (real or imaginary) black vertices in $R$, $v$ is the number of black vertices with two adjacent outgoing edges at $\partial R$, $ z $ is the number of connected components of~$\Skud\cap\partial R$, $i$ is the number of imaginary white vertices on $\partial R$, which are counted twice if they lie on an inner edge of~$R$.
	\end{list}
	
	If additionally
	\newcounter{N6}
	\begin{list}{(\arabic{N6})}{\usecounter{N6}}
		\addtocounter{N6}{\value{N5}}
		\item\label{Sk.8}
		$ \Skdir\cap\Skud=\varnothing $;
		\item\label{Sk.7} at each real vertex there are no		
		directed outgoing edges that are immediate neighbors;
		\item\label{Sk.9} each real white vertex $\Skdir$ has odd valence and is a \emph{sink}, which means that the number of neighboring incoming edges is one more than the number of outgoing edges;
		\item\label{Sk.10} each black vertex is real and
		monovalent (i.e. is a source);
		\item\label{Sk.11}
		the vertices of~$\Skdir$ and $\Skud$ alternate along~$\partial D$,
	\end{list}
	\par\removelastskip
	then $ \Sk $ is called a skeleton \emph{of type~$I$}. 
\end{definition}
  \subsection{Equivalence of abstract skeletons}\label{equivalence}
 Two abstract skeletons
 are called \emph{equivalent} if, up to a homeomorphism $f:D\rightarrow D$
 they can be related by a finite sequence of isotopies
 and the following \emph{elementary moves},
 cf.~\ref{graphmodif}:
 	\begin{itemize}
 		\item[--] \emph{elementary modification}, see \ref{fig.Sk} (a);
 		\item[--] \emph{creating} (\emph{destroying}) a \emph{bridge},
 		see \ref{fig.Sk} (b);
 		the vertex shown in the figure is white,
 		other edges of the skeleton $\Sk$ may also adjoin the vertex;
 		\item[--] \emph{creating} (\emph{removing})  \emph{an undirected edge},
 		see \ref{fig.Sk} (c); the vertex shown in the figure is black real, edges adjacent to it are immediate neighbors, other adjacent directed outgoing edges  may	be present; the edge on the right side of the figure is undirected;
 		\item[--] \emph{\black-in} and its inverse \emph{\black-out}, see
 		\ref{fig.Sk} (d), (e); all vertices shown in the figures
 		are black, other  directed outgoing edges  adjacent to the real vertices	 may	be present in the figure (d);
 		\item[--] \emph{removing/creating a pair of neighboring jump and zigzag},
 		see \ref{fig.Sk} (f);
 		\item[--] \emph{removing/creating a pair of adjacent zigzags},
 		see \ref{fig.Sk} (g);
 		\item[--] \emph{transforming a pair of directed edges into an undirected edge and the inverse transformation},
 		see \ref{fig.Sk} (h).
 		
 	\end{itemize}
 	\begin{figure}[tb]
 		\begin{center}
 			\includegraphics{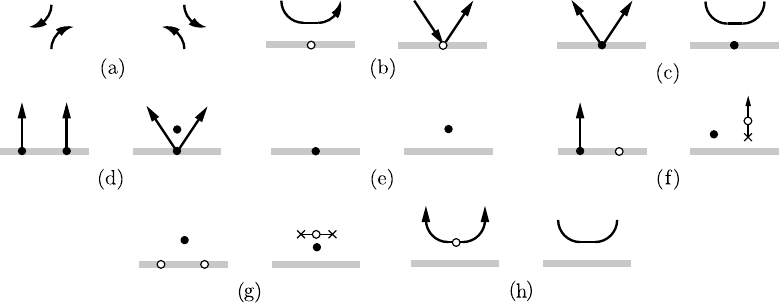}\\
 		\end{center}
 		\caption{Elementary moves of skeletons}\label{fig.Sk}
 	\end{figure}
 	(A move is valid only if the result is again
 	an abstract skeleton.)
 	
 	An equivalence of two abstract skeletons in the same disk
 	with the same set of vertices is called \emph{restricted}
 	if the homeomorphism $f={\rm id}$ and
 	the above isotopies can be chosen to be identical on the vertices.
\subsection{Dotted skeletons}\label{s.skeletons}  
 Intuitively, the dotted skeleton is obtained from a graph $\Gamma$ of some (see the definition of \ref{def.skeleton}) generic trigonal curve, by removing all edges except the dotted ones, gluing the latter at the real \white--vertices and adding imaginary \black--, \white-- and \cross--vertices.
 In this case, the unoriented edges of the skeleton correspond to the junctions of~$\Gamma$.
 
 \begin{definition}\label{def.skeleton}
 	Let $\Gamma \subset D$ be an unramified graph, or a graph obtained from an unramified one by deleting some pairs of adjacent jumps and zigzags, or pairs of adjacent zigzags (see \ref{JZ}, \ref{ZZ}).
 	Let $\bar{D}$ be the disk obtained from~$ D$ by contracting
 	each pillar to a point.
 	The (\emph{dotted}) \emph{skeleton} of~$\Gamma$
 	is a partially directed graph
 	$\Sk=\Sk_\Gamma\subset \bar{D}$
 	obtained from~$\Gamma$ as follows:
 	\begin{itemize}
 		\item[--] each pillar
 		is contracted to a point, which is declared a vertex of the skeleton~$\Sk$, white for the maximal \rm{dotted} segment and black for the maximal \rm{bold} segment;
 		\item[--] each inner \black--/\cross--vertex of~$\Gamma$ is replaced by an inner black/cross--vertex of the skeleton ~$\Sk$;
 		\item[--] each inner \white--vertex of~$\Gamma$ that does not lie on a junction is replaced by an inner white vertex of~$\Sk$, and the adjacent dotted edges 
 		are replaced by oriented skeleton edges directed from this white vertex to the cross or white real vertex;
 		\item [--]
 		each junction is replaced by an unoriented  edge of~$\Sk$, and each of the inner dotted edges of~$\Gamma$ not mentioned above is replaced by an oriented  edge of~$\Sk$ with the orientation obtained from the one of the dotted edge;
 		\item[--] $\Skdir$ ($\Skud$) is the union of black (respectively, white) isolated vertices and the closures of directed (respectively, undirected) edges of $\Sk$ (so that $\Skdir$ and $\Skud$ may intersect at real vertices).
 	\end{itemize}
 	
 \end{definition}
 The following assertions are proved in the same way as the ones of \cite[Propositions 5-7, Theorem 2]{Z21}.
 	The skeleton~$\Sk$ of~$\Gamma$
 	from Definition~\ref{def.skeleton}
 	is an abstract skeleton in the sense of
 	Definition~\ref{def.a.skeleton}.
 
 
 	Any abstract skeleton $\Sk
 	$ is a skeleton
 	of some graph~$\Gamma$
 	in the sense of Definition~\ref{def.skeleton};
 	any two such graphs can be connected by a sequence of
 	isotopies and elementary moves,
 	see~\ref{graphmodif},
 	preserving the skeleton.
 
 	Let $\Gamma_1, \Gamma_2 \subset D$ be the graphs defined by Definition~\ref{def.skeleton};
 	suppose that~$\Gamma_1$ and~$\Gamma_2$
 	have the same pillars.
 	Then $\Gamma_1$ and $\Gamma_2$
 	are related by a restricted equivalence
 	if and only if so are the corresponding
 	skeletons~$\Sk_1$ and~$\Sk_2$.
 
 \begin{theorem} \label{cor.Sk}
 	There is a canonical
 	bijection
 	between the set of rigid isotopy classes 	
 	of almost generic
 	real trigonal curves and the set of equivalence classes
 	of abstract skeletons.
 \end{theorem}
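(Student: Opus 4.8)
The plan is to realize the asserted bijection as a composite: a curve is sent to the graph of a conveniently chosen representative, and that graph to its dotted skeleton. Throughout I use the three assertions stated just before the theorem (the analogues of \cite[Propositions 5--7]{Z21}): that $\Sk_\Gamma$ is always an abstract skeleton; that every abstract skeleton equals $\Sk_\Gamma$ for some $\Gamma$ as in Definition~\ref{def.skeleton} and that any two such $\Gamma$ are joined by elementary moves of graphs preserving the skeleton; and the restricted-equivalence statement. These carry most of the load.

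First I would define the map. Let $C$ be an almost generic real trigonal curve; I may assume $C$ is non-hyperbolic, the hyperbolic case being elementary and handled separately. A small perturbation of the coefficients makes $C$ generic and is a fiberwise deformation, hence leaves the rigid isotopy class of $C$ unchanged; by Theorem~\ref{max.inflected} that class also contains a maximally inflected curve, whose graph $\Gamma$ is unramified, so Definition~\ref{def.skeleton} applies. The rigid isotopy class of $C$ is sent to the equivalence class of $\Sk_\Gamma$, which by the first preceding assertion is a genuine abstract skeleton.

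The heart of the proof is well-definedness and injectivity together: for generic maximally inflected curves $C_1$, $C_2$ with graphs $\Gamma_1$, $\Gamma_2$, the curves $C_1$, $C_2$ are rigidly isotopic if and only if $\Sk_{\Gamma_1}$, $\Sk_{\Gamma_2}$ are equivalent. By the characterization of rigid isotopy through weak equivalence of graphs (Section~\ref{s.rigid}), the left-hand side is weak equivalence of $\Gamma_1$ and $\Gamma_2$, i.e. their being joined by isotopies, by the elementary moves of~\ref{graphmodif}, and by straightenings and creations of zigzags; so I would translate these graph moves into skeleton moves, in both directions. An isotopy of $D$ descends to an isotopy of the disk $\bar D$ obtained by contracting pillars. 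An elementary move of~\ref{graphmodif} either leaves $\Sk_\Gamma$ unchanged (part of the second preceding assertion) or, after a pillar adjustment permitted by the restricted-equivalence assertion, changes it by one of the skeleton moves~\ref{fig.Sk}(a)--(e),(h); and each straightening or creation of a zigzag can be carried out through the compound transformations~\ref{JZ} and~\ref{ZZ}, which are by construction the graph-level forms of the skeleton moves~\ref{fig.Sk}(f) and~\ref{fig.Sk}(g). Conversely, every skeleton move of~\ref{fig.Sk} lifts to a weak equivalence of the underlying graphs (using the second preceding assertion and the restricted-equivalence assertion for (a)--(e),(h), and~\ref{JZ},~\ref{ZZ} for (f),(g)), and a general equivalence of abstract skeletons is a finite composition of such moves together with restricted equivalences and pillar changes, each of which lifts. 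Well-definedness, injectivity, and canonicity (independence of the choices made) all follow at once.

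Surjectivity is quick: by the second preceding assertion every abstract skeleton is $\Sk_\Gamma$ for some $\Gamma$ as in Definition~\ref{def.skeleton}, by Section~\ref{lift} this $\Gamma$ is the graph of some generic real trigonal curve $C$, and the rigid isotopy class of $C$ is a preimage. I expect the main obstacle to be precisely the two-way dictionary of the previous paragraph: going move by move through the lists in~\ref{graphmodif} and~\ref{fig.Sk} while keeping exact account of pillars --- which is where the restricted-equivalence assertion and the compound moves~\ref{JZ},~\ref{ZZ} are indispensable --- and checking that contracting pillars neither destroys nor creates structure. This parallels the proof of \cite[Theorem 2]{Z21}, now applied also to graphs from which pairs of adjacent jumps and zigzags, or pairs of adjacent zigzags, have been removed.
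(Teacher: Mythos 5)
Your plan is correct and follows essentially the route the paper itself takes: the paper offers no independent argument for Theorem~\ref{cor.Sk}, presenting it (together with the three preceding assertions) as proved "in the same way as" \cite[Propositions 5--7, Theorem 2]{Z21}, i.e.\ exactly via the composite "curve $\mapsto$ graph of a maximally inflected representative $\mapsto$ dotted skeleton" and the move-by-move dictionary between weak equivalence of graphs and equivalence of abstract skeletons that you describe. The point you single out as the main obstacle --- that a single zigzag straightening leaves the class of graphs of Definition~\ref{def.skeleton} and must be repackaged into the compound moves \ref{JZ}, \ref{ZZ} realizing \ref{fig.Sk}(f),(g) --- is indeed the substantive step carried by the cited argument of \cite{Z21}.
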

\subsection{Weak equivalence of blocks}  
   By \cite[Proposition 10, Lemma 1]{Z21} for any~$d\ge1$ there exists a unique, up to
   weak equivalence, block $\Gamma\subset D$ of type I and of degree~$3d$.
   \begin{theorem}\label{junction}
   	A block of type II with at least two ovals is weakly equivalent to a graph with a junction.
   \end{theorem}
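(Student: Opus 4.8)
The plan is to reduce to the decomposition of $B$ into cubic blocks and then, near the ``defect'' that makes $B$ of type~II, to convert an artificial solid cut into a junction by spending one oval on each of its two sides.

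By Definition~\ref{def.block}, the type~II block $B$ of degree $3d$ is obtained by artificially gluing cubic blocks $B_1,\dots,B_d$ along segments of solid edges. Such gluings touch no dotted pillar, so the ovals of $B$ are exactly the ovals carried by the individual $B_i$; and, as one reads off from Figure~\ref{cubics}, a cubic block of type~I carries one oval while a cubic block of type~II carries none (the latter differs from a type~I block only by an interior \black--vertex). Hence the hypothesis that $B$ has at least two ovals forces at least two of the $B_i$ to be of type~I. On the other hand, by \cite[Proposition 10, Lemma 1]{Z21} the block of type~I of degree $3d$ is unique up to weak equivalence; and since an artificial solid gluing of type~I cubic blocks is again of type~I (one of the two complex orientations survives the orientation reversal along a solid segment), $B$ can be of type~II only if at least one $B_i$ is a cubic block of type~II. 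Thus $B$ has at least two cubic blocks of type~I and at least one of type~II; in particular $\deg B\ge 9$, which is the case actually needed in Section~\ref{deg9}.

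Now pick two cubic blocks of type~I among the $B_i$. In the tree along which the $B_i$ are glued there is a solid cut $c$ that separates these two blocks; re-associating the gluings, which does not change the weak equivalence class of $B$, I may assume that $c$ has, adjacent to it on each of its two sides, a cubic block of type~I carrying an oval, which I slide up against $c$ by using \black-in to strip the intervening pairs of \black--vertices off the solid segments, exactly as in the proof of Proposition~\ref{s-oval}. I then apply ``creating a zigzag'' (Figure~\ref{fig.zigzag}) to each of these two ovals, producing a zigzag abutting the solid segment on each side of $c$; finally \black-out together with the skeleton move of Figure~\ref{fig.Sk}(h), equivalently Remark~\ref{replace} read in the direction ``artificial solid gluing yields a junction'', reroutes $c$ into a genuine dotted cut identifying isomorphic parts of these two zigzags, i.e.\ a junction. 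The two sides of this cut are again graphs of (here nonsingular) nodal-cuspidal trigonal curves, so the outcome is a graph weakly equivalent to $B$ and carrying a junction, as required.

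The step I expect to be the main obstacle is the admissibility bookkeeping: one must check that none of the intermediate graphs acquires a directed monochrome cycle (the side condition on monochrome modifications and bridges in Section~\ref{graphmodif}), that creating the two zigzags and re-routing $c$ keeps the correct parity of \white--vertices on the pillars involved, and that the resulting cut is genuinely a junction, with $\varphi$ orientation preserving and running through honest zigzags rather than through ovals or bold pillars; this needs a local analysis of the configuration at $c$ against the normal forms of Figure~\ref{cubics}. The hypothesis of at least two ovals enters precisely here: it is what makes an oval available on \emph{each} side of $c$, and an oval on each side is exactly what is needed to manufacture the two zigzag halves of the junction. With a single oval, as for a cubic block of type~II itself, the construction cannot be carried out and the block remains irreducible.
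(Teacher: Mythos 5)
Your argument has a genuine gap at its load-bearing step: the claim that an artificial solid gluing of type~I cubic blocks is again of type~I, from which you deduce that $B$ must contain a type~II cubic block and hence that $\deg B\ge 9$. This is false: the type of a block is not determined by the types of its cubic constituents but also by how they are glued. A concrete counterexample is a block of degree~$6$ of type~II with two ovals. Such blocks exist --- they account for the type~II schemes with $l=2$ in the classification recalled in Figure~\ref{blocks6} and in the footnote to Lemma~\ref{1white}, since a junction of two cubic blocks of type~II (resp.\ of mixed types) has $0$ (resp.\ $1$) ovals and a junction of two type~I cubic blocks is of type~I --- and each of their two cubic constituents must be of type~I, because a type~II cubic block carries no oval. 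Moreover, the paper needs Theorem~\ref{junction} precisely for these degree~$6$ blocks (cases $l=2$ and $l=3$ of the last theorem, and Lemma~\ref{isolp-ts}), so restricting to $\deg B\ge 9$ is not harmless. The subsequent construction --- creating a zigzag from an oval on each side of a separating solid cut and rerouting that cut into a genuine dotted cut --- is also not among the listed moves and is asserted rather than verified, but the structural error above already sinks the proof.

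The paper argues quite differently, and entirely at the level of the skeleton of $B$: using the counts of \cite[Proposition 8]{Z21} ($d$ black vertices, $c\le d$ ovals, $z\ge d$ zigzags after removing jumps by the move of Figure~\ref{fig.Sk}(f)), it first disposes of the case of two jumps not separated by a pillar via the moves (d), (c) of Figure~\ref{fig.Sk}; otherwise it shows that if ovals and zigzags alternated then $B$ would be of type~I, so two adjacent ovals are separated by $r\ge2$ zigzags, and the junction is then produced by the move of Figure~\ref{fig.Sk}(g) together with the local models of Figures~\ref{0} and~\ref{1}. To salvage your approach you would need to characterize which artificial solid gluings of type~I cubic blocks produce a type~II block --- which is essentially the content your proof is missing.
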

   \begin{proof}
   	
   	Let $B$ be a block of type II and $d\geq2$ be its degree. 
   	If $B$ has two jumps not separated by any pillar, then a junction is obtained after transformations \ref{fig.Sk} (d), (c) of the skeleton of $B$. Otherwise, each jump has its own adjacent zigzag, and all jumps can be removed using the transformation \ref{fig.Sk} (f), obtaining, according to \cite[Proposition 8]{Z21}, a graph with $d$ (imaginary) \black--vertices, $c$ ovals, and $z$ zigzags, $c\leq d$, $z\geq d$. If in the graph the zigzags alternate with ovals, then the inverse transformation returns to a block of type $I$ according to the same proposition. 
   	Therefore $B$ contains two adjacent ovals, at least on one of the real segments between them there are lying $r\geq2$ zigzags. We apply the transformation \ref{fig.Sk} (g) to $[r/2]$ triples consisting of a pair of these zigzags and a \black--vertex. Depending on whether $r$ is even or not, we transform the skeleton of the resulting graph according to \ref{0} or \ref{1}. 
   	\begin{figure}[tb] 
   		\begin{center} 
   			\scalebox{1.8}{\includegraphics{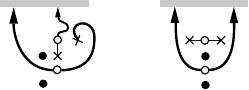}}\\ 
   		\end{center} 
   		\caption{A junction with the type II cubic block}\label{0} 
   	\end{figure} 
   	\begin{figure}[tb] 
   		\begin{center} 
   			\scalebox{1.8}{\includegraphics{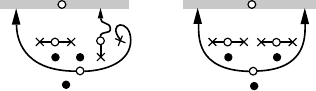}}\\ 
   		\end{center} 
   		\caption{A junction with a block of degree 6}\label{1} 
   	\end{figure} 
   \end{proof}
 \section{Curves on a hyperboloid and trigonal curves}\label{deg9}
 It is well known (see, for example, \cite[3.1.1]{Degt}) that the positive and negative Nagata transformations establish a relationship between (trigonal) curves of bidegree $(m,3)$ on a hyperboloid and proper trigonal curves: a curve $C\subset\Sigma_0=P^1\times P^1$ corresponds to its image $N(C)\subset\Sigma_m$ under the positive Nagata transformation, which inflates the intersection points of~$C $ with  $E_0\subset\Sigma_0$. It is clear that in this case rigidly isotopic curves correspond to rigidly isotopic ones.
 \subsection{Curves $\omega^{\pm}_{inn}$}\label{omega}
 	A curve $C\in\omega^{\pm}_{inn}$ is hyperbolic. Its real scheme can be obtained by combining the real schemes $\langle(3,2)\rangle $ and $\langle(1,1)\rangle$ of nonsingular branches of the curve that intersect transversally at a single point $p$ (see \ref{w.plus}). Therefore, the three intersection points of the curve $C $ with the curve $E_0\subset\Sigma_0=P^1\times P^1$ are real and distinct. Consequently, their images on $N(C)\subset\Sigma_3$ are also real and distinct. To return to~$C$ using a negative Nagata transformation, we need to inflate the singular points of~$N(C)$ and the point $p$ lying on the image of the branch $(3,2)$ (on that arc of it, bounded by the singular points, which has an even intersection multiplicity with the line $y=0$).
 	
 	\begin{figure}
 		\begin{center}
 			\scalebox{0.5}{\includegraphics{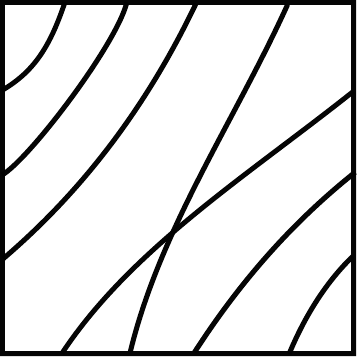}}\\
 		\end{center}
 		\caption{$\omega^{+}_{inn}$}\label{w.plus}
 	\end{figure}
 	\begin{lem}\label{solidcut}
 		The graph of $N(C)$ is equivalent to a graph with a genuine solid cut connecting the imaginary solid vertex with the singular vertices.
 	\end{lem}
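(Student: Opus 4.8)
The plan is to read the required decomposition off the graph $\Gamma=\Gamma_{N(C)}$ after normalising it by the elementary moves of Section~\ref{graphmodif}. Recall from the discussion above that $N(C)\subset\Sigma_3$ is a nodal-cuspidal trigonal curve of degree~$9$ with exactly two singular points, both solitary nodes, lying over the two points of $C\cap E_0$ that sit on the image of the $(3,2)$-branch, while the node $p$ of $C$ itself is resolved by the positive Nagata transformation centred at it. Hence in $\Gamma$ the two singular points are isolated singular \cross--vertices on $\partial D$ ("degenerate ovals"): each is flanked along $\partial D$ by dotted edges and is adjacent to solid edges running into the interior of $D$. Between the two singular \cross--vertices there runs along $\partial D$ the dotted arc $a$ which is the image of the sub-arc of the $(3,2)$-branch through $p$; by the parity recalled above -- this sub-arc meets the zero section $\{y=0\}$ an even number of times -- the arc $a$ carries an even number of \white--vertices.

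By Theorem~\ref{equiv.curves}, extended to nodal-cuspidal curves, it is enough to exhibit one graph equivalent to $\Gamma$ that has the asserted cut. First we put the two singular \cross--vertices into the required mutual position: using Proposition~\ref{s-oval} we move any ovals off $a$, and using the passage transformations of Figures~\ref{isol} and~\ref{A1} we slide the two singular \cross--vertices past any other \cross--vertices along $\partial D$; since the \white--vertices on $a$ occur in pairs, after \black-in and repeated \white-out -- absorbing these pairs together with the bold structure attached to them -- we may assume $a$ runs directly between the two singular \cross--vertices. Next, pick a solid edge emanating from each of the two singular \cross--vertices into the interior of $D$. Since $\Gamma$ has no directed monochrome cycle, its solid subgraph is a forest; by creating and destroying bridges, by monochrome modifications, and by \black-in and \black-out we drag the far ends of these two solid edges together to a single interior monochrome solid vertex $v$, while pushing every other solid or bold edge, every \black--vertex, and every interior \white--vertex and \cross--vertex of $\Gamma$ to the side of the resulting arc that does not contain $a$. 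What remains alongside $a$ is a solid segment $I$ whose two endpoints are the singular \cross--vertices and whose interior contains the single monochrome solid vertex $v$.

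The segment $I$ is a cut in the sense of Section~\ref{cuts}, of the third admissible type -- the one that ends at singular vertices and contains one monochrome vertex. Cutting $D$ along $I$ presents $\Gamma$ as the gluing of two nodal-cuspidal curve graphs $\Gamma_1$ and $\Gamma_2$ along $I$, and the vertex $v$, which is real on $\partial D_1$ and $\partial D_2$ but becomes interior in the reconstituted disk $D$, is precisely the imaginary solid vertex of the statement. The gluing identification is forced to match the two singular \cross--vertices and $v$ correspondingly -- this is what recovers $\Gamma$ -- so it preserves the orientation of $I$; an orientation-reversing identification would turn one of the two solid edges at $v$ into an outgoing one and create a directed monochrome segment incompatible with the structure of a monochrome vertex. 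Hence the cut is genuine, and it connects the imaginary solid vertex $v$ with the two singular vertices, which proves the lemma. The main obstacle is the normalisation of the second paragraph: one has to check that all of the solid and bold material trapped between the two singular \cross--vertices can be reduced by elementary moves to exactly one interior solid vertex -- and no more -- and that the two disks $D_1,D_2$ so produced carry genuine trigonal-curve graphs. This uses the specific features of $N(C)$ recalled in the first paragraph -- degree~$9$, exactly two singular \cross--vertices of degenerate-oval type, an even number of \white--vertices on $a$ -- together with the region-counting bookkeeping for graphs (paralleling condition~\ref{Sk.5} of Definition~\ref{def.a.skeleton}) applied to the region bounded by $a$ and the prospective cut, which leaves room for exactly one monochrome solid vertex on the neck.
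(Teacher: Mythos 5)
Your overall goal---producing a solid path $s_1$--$v$--$s_2$ by elementary moves---is the right one, but the argument has two genuine defects, one of them fatal. First, you misidentify the singular points of $N(C)$. For $C\in\omega^{\pm}_{inn}$ the curve is hyperbolic, so every fiber meets $\R C$ in three real points; contracting the fiber through each of the two transversal points of $C\cap E_0$ therefore identifies two \emph{real} branches, and the nodes of $N(C)$ are crossing nodes, not solitary ones. Their \cross--vertices are non-isolated in the sense of the paper (two real dotted edges and one interior solid edge), so the apparatus you invoke for isolated singular points (Proposition \ref{s-oval}, the move of Fig.~\ref{isol}) does not apply---and is not needed, since a hyperbolic graph has all real edges dotted and hence carries no ovals and no other real \cross--vertices to slide past. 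This mislabelling is of limited consequence only because the steps it supports are vacuous here.

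The fatal problem is that the step you yourself flag as ``the main obstacle'' is precisely the content of the lemma, and the normal form you aim for cannot exist. If the arc $a$ is emptied of \white--vertices and all interior structure is pushed to the far side of the prospective cut, the region between $a$ and the cut has only \cross--vertices and solid and dotted edges on its boundary; no region of a trigonal graph can look like this, because the $j$-invariant maps the boundary of every region onto $\R P^1$ with positive degree, so every region must carry essential vertices of all three colours. Equivalently, the cut has to split the degree-$9$ graph into trigonal graphs of degrees $3$ and $6$ (this is what the subsequent theorem uses), so neither side of the cut may be emptied. The paper's proof does the opposite of what you propose: it normalises the arc with an \emph{odd} number of \white--vertices down to a single \white--vertex $w_1$, uses the \black--vertex $b$ at the end of the bold edge of $w_1$ to arrange, by monochrome modifications, a solid edge from $b$ to $s_1$, and then creates the imaginary monochrome vertex $v$ by stopping a monochrome modification of two solid edges in an intermediate position inside a region that is checked to be non-triangular. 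None of this is replaced by your ``drag the far ends together'' description, so the proof is incomplete at its crucial point.
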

 	\begin{proof}
 		Since the degree 9 of the graph of~$N(C)$ is odd, it has an odd number of \white--vertices. On the real segment of the graph bounded by the singular vertices $s_1, s_2$ and having an odd number of \white--vertices, we remove all these \white--vertices except one using \white-in and destroy any bridges that may have appeared. The remaining \white--vertex $w_1$ is connected by real edges $\delta_1,\delta_2$ with $s_1, s_2$ and by a bold edge $\beta_1$ with some \black--vertex $b$. Let $\beta_2, \beta_3$ be other bold edges adjacent to $b$ (see Fig. \ref{s.cut}), and let a \white--vertex $w_2$ be the end of edge $\beta_2$. Making monochrome modifications if necessary, we obtain that $w_2$ is the end of the edge $\beta_3$ and the end of the path $b,\sigma_1,c,\delta_3,w_2$, where $\sigma_1$ is a solid edge lying between $\beta_1$ and $\beta_2$, $c$ is a \cross--vertex, $\delta_3$ is a dotted edge. We connect vertices $b$ and $s_1$ by a solid edge $\sigma_2$, making monochrome modifications if necessary. The region of the obtained graph containing on the boundary the path $s_1,\sigma_2,b,\beta_1,\delta_2,s_2,\sigma_3$, where $\sigma_3$ is a solid edge, is not triangular, therefore during the (solid) monochrome modification of the edges $\sigma_2,\sigma_3$ one can stop in an intermediate position, when an imaginary monochrome vertex and, thus, the desired cut arise. 
 	\end{proof}
 	\begin{figure}
 		\begin{center}
 			\scalebox{1.2}{\includegraphics{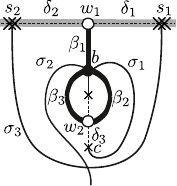}}\\
 		\end{center}
 		\caption{Solid cut}\label{s.cut}
 	\end{figure}
 	\begin{remark}\label{ssp}
 		The image of a singular point of the curve $C$ corresponds to a point of the graph of~$N(C)$ lying on a real segment of the graph bounded by singular vertices $s_1, s_2$ and having an even number of \white--vertices, otherwise there exists a graph transformation that takes $s_1, s_2$ to an imaginary singular vertex, which is impossible, according to what was said at the beginning of Section \ref{omega}.
 	\end{remark}
 	\begin{theorem}
 		Each of the classes $\omega^{+}_{inn}$ and $\omega^{-}_{inn}$ has a  unique wall.
 	\end{theorem}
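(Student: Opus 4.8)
The plan is to push the problem down to graphs of trigonal curves by means of Nagata transformations and then to exploit the normal form provided by Lemma~\ref{solidcut}. By the correspondence recalled at the beginning of Section~\ref{deg9}, the positive Nagata transformation $C\mapsto N(C)$ carries rigidly isotopic curves to rigidly isotopic ones; conversely $C$ is recovered from $N(C)$ by inflating the singular points of $N(C)$ together with one further point lying on the image of the branch $(3,2)$, and by Remark~\ref{ssp} that point is confined to a canonically determined real arc (the one, bounded by $s_1$ and $s_2$, carrying an even number of \white--vertices), all of whose points are interchangeable by a rigid isotopy. Hence the negative Nagata transformation is well defined on the family in question, so that the number of walls of $\omega^{\pm}_{inn}$ equals the number of rigid isotopy classes of the corresponding trigonal curves $N(C)\subset\Sigma_3$. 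Now $N(C)$ is nodal-cuspidal, has only real singularities, and is non-hyperbolic, so Theorem~\ref{s.max.inflected} lets us replace it by a rigidly isotopic maximally inflected curve; and since rigid isotopy of the (nodal-cuspidal, generic) trigonal curves in play is detected by weak equivalence of their graphs, it suffices to prove that the graphs of all these maximally inflected representatives are weakly equivalent.

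It remains to establish the weak uniqueness of the graph $\Gamma$ of $N(C)$. By Lemma~\ref{solidcut}, $\Gamma$ is equivalent to a graph carrying a genuine solid cut $\kappa$ that joins the imaginary solid vertex to $s_1$ and $s_2$. Cutting $\Gamma$ along $\kappa$ (Section~\ref{cuts}) presents it as a gluing $\Gamma=\Gamma_1\cup_\kappa\Gamma_2$, where $\Gamma_1$ is the piece carrying $s_1,s_2$ and $\Gamma_2$ is the complementary nonsingular piece, of degrees $3$ and $6$ respectively. Besides $s_1,s_2$, the piece $\Gamma_1$ carries only the vertices $w_1,c,b$ that appear in the proof of Lemma~\ref{solidcut}; since the cubic graphs with these features form a finite list, one checks that $\Gamma_1$ is a fixed graph independent of $C$. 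The piece $\Gamma_2$ is a nonsingular maximally inflected trigonal graph of degree $6$ carrying all the ovals and zigzags; by the description~\ref{CR} it is built from cubic blocks by junctions, so by Theorem~\ref{junction}, the uniqueness of blocks of each type and degree (\cite[Proposition~10]{Z21}, \cite[5.6.7]{DIK08}), and the skeleton bijection of Theorem~\ref{cor.Sk}, it is weakly equivalent to a standard graph determined by its type and by its number of ovals.

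To conclude I would observe that the type of $N(C)$ together with the number and the cyclic arrangement of its ovals, zigzags and singular vertices are functions of the complex scheme of $C$, which is constant on $\omega^{\pm}_{inn}$; hence $\Gamma_1$, $\Gamma_2$ and the gluing datum (a genuine solid cut, which by Section~\ref{cuts} is unique up to isotopy once the structure of the glued segment is fixed) are all determined up to weak equivalence, and therefore so is $\Gamma$. Together with the first paragraph this shows that $\omega^{+}_{inn}$ consists of a single wall; the class $\omega^{-}_{inn}$ is treated by the identical argument, the superscript only fixing, through the complex scheme, the discrete data above.

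I expect the crux of the argument to be the analysis of the complementary piece $\Gamma_2$: this is where the classification of nonsingular maximally inflected trigonal graphs really enters, and where one must carefully count the ovals and zigzags that fall on each side of $\kappa$ (recalling that one \white--vertex was absorbed into $\Gamma_1$ when passing to the solid cut), check that the residual configuration is of the expected, non-exceptional shape, and invoke Theorem~\ref{junction} to collapse the ambiguity between the type-$\mathrm{I}$ and type-$\mathrm{II}$ presentations of $\Gamma_2$. A lesser point that needs care is verifying that the solid cut really detaches $s_1,s_2$ from all the remaining real vertices, so that the splitting $\Gamma=\Gamma_1\cup_\kappa\Gamma_2$ is legitimate with no further elementary moves.
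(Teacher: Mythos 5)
Your proposal follows the paper's own route --- reduce to graphs of proper trigonal curves on $\Sigma_3$ via the Nagata transformation, apply Lemma~\ref{solidcut} to produce the genuine solid cut, split the degree-9 graph into a degree-3 and a degree-6 piece, and conclude by the uniqueness of blocks --- so it is essentially the same proof and essentially correct. The one place where the paper is cleaner is the degree-3 piece: since the cut ends at $s_1,s_2$, both resulting pieces are graphs of \emph{nonsingular} curves (a cubic of type II and a sextic with one oval), so the rigidity of your $\Gamma_1$ is not a separate ``finite list, one checks'' verification but again an instance of Theorem~\ref{max.inflected} plus the uniqueness of the corresponding block.
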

 	\begin{proof}
 		After the cut specified in Lemma \ref{solidcut}, we obtain a cubic graph of type II and a graph of degree 6 with one oval, which by Theorem \ref{max.inflected} are weakly equivalent to the corresponding blocks. The latter are unique up to weak equivalence by  \cite[Proposition 8, Lemma 1]{Z21}.
 		
 		For a fixed singular fiber, proper trigonal curves symmetric with respect to the $x$-axis have the same graph, but different real schemes $\omega^{+}_{inn}$ and $\omega^{-}_{inn}$.
 	\end{proof}
 	\begin{remark}
 		Curves from the classes $\omega^{\pm}_{out} $, $\tilde{\omega}$ differ from the curve $C\in\omega^{\pm}_{inn}$ only by the choice of the point $p\in N(C)$ (on the arc of the image of the branch $(3,2)$, bounded by singular points, which has an odd intersection multiplicity with the line $y=0$ -- for the classes $\omega^{\pm}_{out} $; on the image of the branch $(1,1)$ -- for the class $\tilde{\omega}$). Therefore, the same arguments prove the connectedness of these classes as well.
 	\end{remark}
 	\subsection{Curves $\alpha^{\pm}_{lp}<l>$} 
 A curve $C\in\alpha^{\pm}_{lp}<l>$ is non-hyperbolic. Its real part contains a (possibly singular) branch realizing the class $(1,\pm2)$ in $H_1(\R X)$, and $l$ ovals if the singular point does not lie on an oval (see Fig. \ref{alpha}). Therefore, either the three intersection points of $C$ with $E_0\subset\Sigma_0=P^1\times P^1$ are real and distinct, or the multiplicity of the intersection of $C$ with $E_0$ at the singular point is three and there is one more point of transversal intersection of these curves. Consequently, the proper trigonal curve $N(C)\subset\Sigma_3$ has the singular fibers listed in \ref{list}. To return to~$C$ using the negative Nagata transformation, we need to inflate the singular points of~$N(C)$ and the point $p$ lying on the image of the branch $(1,\pm2)$ (on that arc of it bounded by the singular points that has an even intersection with the line $y=0$).
 \begin{figure}
 	\begin{center}
 		\scalebox{0.5}{\includegraphics{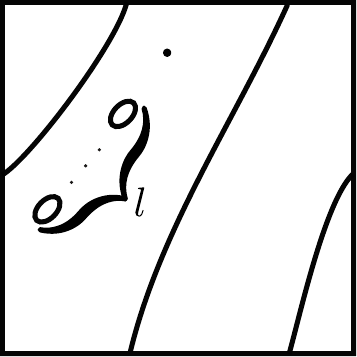}}\\
 	\end{center}
 	\caption{ $\alpha^{+}_{lp}<l>$}\label{alpha}
 \end{figure}
 \begin{remark}\label{gamma_out}
 	For $l=1$, the curve $C$ belongs to type II. A type I curve with a similar real scheme {\rm (see \ref{gamma})} belongs to one of the walls of $\gamma^{\pm}_{out}$. The uniqueness of the latter, as well as of the walls of $\tilde{\gamma}^{\pm}$, $\gamma^{\pm}_{inn} $, follows from the uniqueness of the block of degree 9, having type~I, {\rm (see \cite[Proposition 10, Lemma 1]{Z21})} since these walls differ only in the choice of the point $p\in N(C)$ (on the arc of the image of the branch $(1,\pm2)$, bounded by singular points, which has an even/odd intersection multiplicity with the line $y=0$ -- for the walls $\gamma^{\pm}_{out}$/$\tilde{\gamma}^{\pm}$; on the oval -- for the walls $\gamma^{\pm}_{inn} $). \end{remark}
 \begin{figure}
 	\begin{center}
 		\scalebox{0.5}{\includegraphics{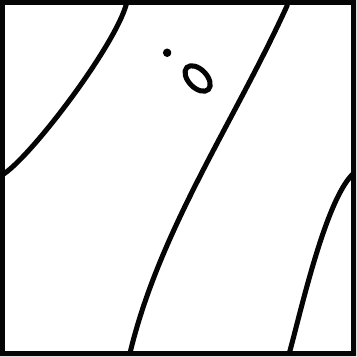}}\\
 	\end{center}
 	\caption{ $\gamma^{+}_{out}$}\label{gamma}
 \end{figure}
 \begin{lem}\label{isolp-ts}
 	There exists a nodal-cuspidal curve with two non-degenerate isolated singular points that is rigidly isotopic to $N(C)$.
 \end{lem}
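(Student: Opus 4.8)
The plan is to argue entirely on the level of graphs, transforming the graph $\Gamma=\Gamma_{N(C)}$, within the weak equivalence of graphs of nodal-cuspidal curves, into the graph of a curve whose only singularities are two isolated singular \cross--vertices.

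First I record the shape of $N(C)$. Since $C$ is non-hyperbolic with a single real singular point and no imaginary singularities, $N(C)\subset\Sigma_3$ is a non-hyperbolic nodal-cuspidal proper trigonal curve of degree~$9$ with only real singular points; and since the return from $\Sigma_3$ to $\Sigma_0$ by negative Nagata transformations inflates the singular points of $N(C)$ together with the point~$p$, i.e. three points in all (and each nodal or cuspidal singularity is inflated in one step), $N(C)$ has exactly two singular points $s_1,s_2$. By the analysis of the Nagata transform together with the list~\ref{list}, each of $s_1,s_2$ is a node (possibly in special position with respect to its fiber), hence a singular \cross--vertex of $\Gamma$; a priori neither need be isolated. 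By Theorem~\ref{s.max.inflected} I may, and do, assume $N(C)$ maximally inflected.

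Next I normalise $s_1$ and $s_2$, one at a time. On the real segment of $\Gamma$ carrying a given $s_i$ I first remove all but possibly one \white--vertex by \white-in and destroy the resulting bridges, exactly as at the start of the proof of Lemma~\ref{solidcut}; this is possible because $\deg\Gamma=9$ is odd. If $s_i$ is already adjacent to solid edges it is an isolated singular \cross--vertex and nothing more is needed for it. Otherwise $s_i$ lies between dotted edges: pairing off the \black--vertices on the adjacent bold segments by \black-in, and then applying the move~\ref{A1} (passage of a nodal point through a point $\tilde{A}_1^{*}$) together with monochrome modifications and, where a cusp fragment $\tilde{A}_2$ intervenes, the move~\ref{isol} (passage of a nodal point through a cusp) and \black-in — as in the proof of Proposition~\ref{s-oval} — carries $s_i$ past the neighbouring features and leaves it as an isolated \cross--vertex on a dotted segment. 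Once both $s_1,s_2$ are isolated, Proposition~\ref{s-oval} lets them be slid past the ovals along the \white--vertex-free parts of their dotted segments, so they do not obstruct each other.

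All the moves used — isotopies, elementary moves (see~\ref{graphmodif}), straightening and creating zigzags, and the moves~\ref{isol}, \ref{A1} — belong to the weak equivalence of graphs, and for real nodal-cuspidal curves weak equivalence corresponds to rigid isotopy (the nodal-cuspidal versions of Theorems~\ref{equiv.curves} and~\ref{s.max.inflected}, together with \cite[Theorem~4.26]{Degt}). Hence the nodal-cuspidal curve realising the resulting graph, which has precisely the two isolated singular \cross--vertices $s_1,s_2$ and no other singularities, is rigidly isotopic to $N(C)$, which proves the lemma. I expect the main difficulty to be the normalisation step: one must check, running through the local pictures at $s_1,s_2$ permitted by the list~\ref{list}, that the move~\ref{A1} (and, when a cusp fragment appears, \ref{isol}) is genuinely applicable, and that each monochrome modification and each \black-in carried out along the way keeps $\Gamma$ free of directed monochrome cycles and is therefore legitimate — the same kind of bookkeeping that is done in the proof of Lemma~\ref{solidcut}.
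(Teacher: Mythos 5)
Your overall strategy (work on the graph of $N(C)$ and turn each non-isolated singular \cross--vertex into an isolated one via the moves of Figures \ref{isol} and \ref{A1}) is the same as the paper's, but the step you yourself flag as ``the main difficulty'' is precisely where the content of the paper's proof lies, and your proposal does not supply it. The move of Figure \ref{A1} is a passage of the nodal point through an $\tilde{A}_1^*$ fragment, so to apply it you must first exhibit such a fragment --- concretely, a jump adjacent to the relevant zigzag --- next to the singular vertex $s$. This is not automatic, and no amount of \white-in, \black-in and monochrome modifications will create one. The paper obtains it by a global argument: after reducing each dotted segment adjacent to $s$ to a single \white--vertex, it perturbs the singular vertices into ovals, uses Theorem \ref{s.max.inflected} and the block decomposition of \ref{CR} to locate the solid segment coming from $s$ inside a type II block between two zigzags, and then invokes Theorem \ref{junction} together with \cite[Proposition 8]{Z21} to conclude that this block may be assumed to have at most one oval and hence carries a jump near one of those zigzags; only then can the moves \ref{A1} and \ref{isol} be performed. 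Your proof replaces this with the assertion that the moves ``carry $s_i$ past the neighbouring features,'' which is exactly the point at issue; your appeal to Proposition \ref{s-oval} does not help here, since that proposition applies only after the singular vertex is already isolated.

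A second omission: you treat $s_1$ and $s_2$ independently and only let them interact after both are isolated, but there is a configuration in which they genuinely obstruct each other, namely when both singular vertices lie on one dotted segment between two non-singular \cross--vertices and cut it into three pieces each containing an odd number of \white--vertices. The paper handles this as a separate final case and rules it out: the same jump argument would remove the \white--vertex between $s_1$ and $s_2$, producing a graph of a curve violating the parity constraint of Remark \ref{ssp} on the segment carrying the point $p$ that must be blown down to return to $C$. Without the existence of the jump and without this case analysis, the proof is incomplete.
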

 \begin{proof}
 	The singular points of $N(C)$ correspond to the singular vertices of its graph. Let one of the singular vertices $s$ be non-isolated. If it is connected by a real dotted edge to a (non-singular) \cross--vertex, then $s$ can be turned into an isolated vertex using the transformation \ref{isol}.
 	
 	Now let $s$ be a singular vertex connected to two \cross--vertices by real dotted segments containing an odd number of \white--vertices. Using \white-in, we leave one \white--vertex on each of these segments. 
 	By Theorem \ref{s.max.inflected}, the graph of $N(C)$ is unramified, so the result $\Gamma$ of its perturbation, which turns singular vertices into ovals, 
 	splits into blocks after genuine cuts along the dotted edges (see \ref{CR}). Therefore, the solid segment obtained from $s$ lies in one of the blocks between the two zigzags, so the block is of type II and by Theorem \ref{junction}  has at most one oval. Therefore, there is a jump near one of the zigzags. Returning to the graph of~$N(C)$ and applying the transformations \ref{A1} and \ref{isol} to the fragment of the graph containing the \black--vertex of this jump, we obtain an isolated singular vertex.
 	
 	Finally, the last possible case that prevents us from applying the transformation \ref{isol} at once is when both singular vertices $s_1,s_2$ lie between two \cross--vertices on a dotted segment and divide it into three segments containing an odd number of \white--vertices (since Remark \ref{ssp} is obviously true also for~$C$,  there is an odd number of \white--vertices between $s_1,s_2$). Using \white-in, we leave one \white--vertex on each of these segments. The same arguments as in the previous case show that there is a jump near one of the  \cross--vertices, so applying the transformation \ref{A1} removes the \white--vertex between $s_1,s_2$, resulting in a graph of a curve that does not lie in $\alpha^{\pm}_{lp}<l>$ by virtue of  Remark \ref{ssp}.
 \end{proof}
 Using the proven lemma, we will further assume that the singular vertices of the graph of~$N(C)$ are isolated. Denote by $\Gamma$ the result of the perturbation of this graph, turning the singular vertices into ovals.
 \begin{lem}\label{1white}
 	
 	
 	The graph of the curve $N(C)$ is weakly equivalent to a graph with a single real \white--vertex.
 	\end{lem}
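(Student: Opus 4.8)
The plan is to reduce the number of real \white--vertices of the graph of $N(C)$ step by step, using the weak-equivalence moves (straightening/creating zigzags together with \white-- and \black-in, as packaged in the transformations \ref{JZ} and \ref{ZZ}) until a single real \white--vertex survives. First I would recall the overall structure: by Lemma \ref{isolp-ts} we may assume the singular vertices are isolated, and by Theorem \ref{s.max.inflected} the graph of $N(C)$ is unramified; its degree is $9$, so it has an odd number of \white--vertices, and hence a single real \white--vertex is the minimum we could hope for on parity grounds. Consider the perturbation $\Gamma$ turning the (now isolated) singular vertices into ovals; by the constructive description of \ref{CR}, $\Gamma$ decomposes after genuine cuts along dotted edges into blocks, of which (by Theorem \ref{junction}, since the type II blocks that appear here carry at most one oval) most are cubic blocks, and the real \white--vertices are distributed among the dotted pillars (ovals, zigzags) of these blocks.

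The key steps, in order, are: (1) on each maximal dotted segment between consecutive \cross--vertices, use \white-in to push out all but at most one \white--vertex, destroying any bridges that appear, so that each such segment carries $0$ or $1$ \white--vertex; (2) whenever two neighboring dotted pillars each carry a \white--vertex, look for an adjacent jump (a bold pillar with an odd number of \white--vertices): Theorem \ref{junction} guarantees that a type II block with $\ge 2$ ovals is weakly equivalent to a graph with a junction, so we may always arrange a jump next to one of the zigzags, and then the transformation \ref{JZ} (removal of adjacent jump and zigzag) eliminates one more real \white--vertex; (3) similarly, a pair of adjacent zigzags is removed by the transformation \ref{ZZ} after a \black-out; iterating (2) and (3) across the whole boundary circle, every zigzag is paired off and absorbed, and every even dotted pillar (oval) is emptied of \white--vertices by step (1). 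What remains on $\partial D$ is a collection of dotted pillars none of which contains a \white--vertex, together with exactly one dotted pillar containing the unique leftover \white--vertex forced by the odd parity of the degree-$9$ graph.

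The main obstacle I expect is step (2): ensuring that at each stage there really is a jump adjacent to a zigzag that we can cancel, i.e. that the bold structure cooperates. This is where Theorem \ref{junction} and the block description from \cite[Proposition 8]{Z21} do the real work — a type II block with two or more ovals becomes a graph with a junction, which after transformations of its skeleton (\ref{fig.Sk}(d),(c),(f),(g)) exposes the needed jump; but one has to check that introducing the junction does not create new real \white--vertices faster than steps (2)–(3) remove them, and that the process terminates rather than cycling. The bookkeeping is exactly the monotone quantity "number of real \white--vertices," which strictly decreases at each application of \ref{JZ} or \ref{ZZ} and is left unchanged (after bridge destruction) by \white-in; since it is bounded below by $1$ (odd degree), the process stops, and it stops only when no further cancellation is possible, which — given that Theorem \ref{junction} always produces a jump whenever two ovals coexist — happens precisely when a single real \white--vertex is left. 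Finally I would note that the same argument applies uniformly to all the singular fibers listed in \ref{list}, since the only input used is the isolatedness of the singular vertices and the block decomposition of the perturbed graph.
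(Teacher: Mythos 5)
Your overall strategy --- cancel real \white--vertices in pairs by the moves \ref{JZ} and \ref{ZZ} applied to the block decomposition of the perturbed graph $\Gamma$ --- is the same as the paper's, but two links in your chain are not actually supplied. First, you derive the existence of a jump adjacent to a zigzag from Theorem \ref{junction}; that theorem produces a \emph{junction} (a genuine cut along parts of zigzags), not a \emph{jump} (a bold pillar with an odd number of \white--vertices), and the one does not give the other. The jumps you need come from the explicit structure of the cubic and degree-6 blocks described in \cite[Proposition 8]{Z21}, and they become available only once the decomposition of $\Gamma$ has been pinned down. Second, you flag but do not close the adjacency/termination issue: \ref{JZ} and \ref{ZZ} require the two pillars being cancelled to be adjacent, and a configuration in which zigzags alternate with ovals stalls the greedy reduction (the paper notes that such a configuration is a block of type I). The paper sidesteps both problems by first determining the decomposition exactly: $\Gamma$ cannot be a single block of type I by Remark \ref{gamma_out}, and any type II block with at least two ovals splits by Theorem \ref{junction}, so $\Gamma$ is a junction of three cubic blocks or of a cubic block and a block of degree 6; the cancellations \ref{JZ}, \ref{ZZ} are then carried out inside the outer blocks, whose pillar structure is known, and no general termination argument is needed. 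Finally, your reduction ends at the perturbed graph $\Gamma$: to conclude for the graph of $N(C)$ itself you must contract two of the surviving ovals back to the isolated singular vertices (guaranteed by Lemma \ref{isolp-ts}), as the paper does in its last step.
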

 	\begin{proof}
 	By Lemma \ref{isolp-ts} the singular vertices of the graph can be considered isolated.
 	%
 	According to Remark \ref{gamma_out}, the graph $\Gamma$ cannot be a block of type I, therefore, by Theorem \ref{junction}, the graph is a junction of three cubic blocks or a cubic block and a block of degree 6 (see \ref{blocks6}). The transformations \ref{JZ}, \ref{ZZ} in the left and right blocks in the first case and in both blocks in the second, followed by contraction of two ovals to the singular points, yield the desired graph.
 	\footnote{The same considerations give the rigid isotopy classification of non-singular real trigonal curves of genus 4 on a quadratic cone. After blowing up the vertex of the cone, such a curve yields a proper curve on $\Sigma_2$ whose graph is either a block of degree 6 or a junction of two cubic blocks. Curves obtained from each other by reflection about the $OX$ axis have the same graph, but symmetric real schemes. A rigid isotopy class is determined by the complex scheme; their number is 11: for $l=0$ the scheme of a hyperbolic curve (type I) and a scheme of type II, for $l=2$ the scheme of type I and two schemes of type II symmetric to each other, for $l=1, 3, 4$  two schemes symmetric to each other.}

 	\end{proof}
 	\begin{figure}
 	\begin{center}
 		\includegraphics{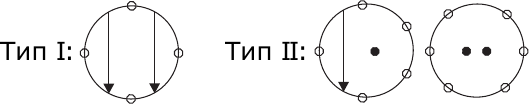}\\
 	\end{center}
 	\caption{Blocks of degree 6 up to weak equivalence}\label{blocks6}
 	\end{figure}
 	\begin{theorem}
 	The wall $\alpha^{\pm}_{lp}<l>$ is uniquely determined by the complex scheme.
 	\end{theorem}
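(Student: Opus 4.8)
The plan is to follow the scheme of the proof for the classes $\omega^{\pm}_{inn}$, passing to the trigonal curve $N(C)\subset\Sigma_3$. Since the Nagata transformations take rigidly isotopic curves to rigidly isotopic ones (Section~\ref{deg9}), and two generic real trigonal curves are rigidly isotopic if and only if their graphs are weakly equivalent, it suffices to prove that the weak equivalence class of the graph of $N(C)$, together with the choice of the point $p$ on the image of the branch $(1,\pm2)$ to be inflated by the negative Nagata transformation, is determined by the complex scheme of $C$. It will follow that any two curves of bidegree $(4,3)$ realising the complex scheme $\alpha^{\pm}_{lp}<l>$ lie in one wall.

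First I would normalise the graph. By Lemma~\ref{isolp-ts} we may replace $N(C)$ by a rigidly isotopic nodal-cuspidal curve whose two singular points are non-degenerate isolated, so that the singular vertices of its graph become isolated dotted \cross--vertices; by Theorem~\ref{s.max.inflected} this graph is unramified; and by Lemma~\ref{1white} it is weakly equivalent to a graph $\Gamma_0$ with a single real \white--vertex, the least possible number for a non-hyperbolic curve of odd degree~$9$.

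Next I would recover the block decomposition. Let $\Gamma$ be the perturbation of $\Gamma_0$ that turns the two singular vertices into ovals; following~\ref{CR}, $\Gamma$ splits, after genuine cuts along dotted edges, into a chain of blocks whose disks are glued into one disk. By Remark~\ref{gamma_out} this is not a single block of type~I, so by Theorem~\ref{junction} it is a junction of three cubic blocks, or of a cubic block and a block of degree~$6$ (Figure~\ref{blocks6}). The complex scheme of $C$ fixes the type of the curve, the number $l$ of ovals and their cyclic positions with respect to the long component and to the two ovals produced by the isolated singular points, and, in type~I, the complex orientations; hence it fixes the chain of blocks, the type of each block and the distribution of ovals among them. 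Every block of type~I is unique up to weak equivalence by~\cite[Proposition~10, Lemma~1]{Z21}, the cubic blocks are unique up to weak equivalence, and the degree-$6$ blocks of Figure~\ref{blocks6} are pairwise distinguished by these same data; therefore $\Gamma$, and with it $\Gamma_0$, is determined up to weak equivalence.

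It remains to pin down the position of $p$ and the sign, and this is where the main work lies. The point $p$ lies on the arc of the image of the branch $(1,\pm2)$ bounded by the two singular points and having even intersection multiplicity with the line $y=0$; Remark~\ref{ssp}, which is valid for $C$ as well, forces the two singular vertices to lie on the real segment of $\Gamma_0$ carrying an even number of \white--vertices, so that arc is combinatorially singled out and all admissible choices of $p$ along it give the same wall. As in the proof for $\omega^{\pm}_{inn}$, for a fixed singular fiber the proper trigonal curves symmetric with respect to the $x$-axis have the same graph but realise the real schemes with opposite signs; thus $\alpha^{+}_{lp}<l>$ and $\alpha^{-}_{lp}<l>$ are distinct walls, each connected by the argument above. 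The delicate point throughout is to check that the data extracted from $\Gamma_0$ --- the order of the blocks in the chain, their types, the oval counts, the complex orientations, and the arc carrying $p$ --- are exactly the data encoded by the complex scheme, and in particular that the ambiguity among the degree-$6$ blocks in Figure~\ref{blocks6} is resolved by them.
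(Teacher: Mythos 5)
Your setup coincides with the paper's: you normalize via Lemma~\ref{isolp-ts} and Lemma~\ref{1white}, pass to the perturbed graph $\Gamma$, and use Remark~\ref{gamma_out} and Theorem~\ref{junction} to see that $\Gamma$ is a junction of three cubic blocks or of a cubic block and a block of degree~6. (One small omission at the start: you do not invoke Proposition~\ref{s-oval}, which is what lets the paper reduce the question to $\Gamma$ in the first place by showing that the isolated singular vertices can be exchanged with ovals, so that their position among the ovals is immaterial.)

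The genuine gap is in the central step. You assert that the complex scheme ``fixes the chain of blocks, the type of each block and the distribution of ovals among them,'' and deduce uniqueness of $\Gamma$ from the uniqueness of the individual blocks. This assertion is false as stated, and proving that the a priori different block decompositions compatible with one complex scheme are all weakly equivalent is precisely the content of the paper's proof. For example, for $l=2$ the cubic block $K$ in the junction $\Gamma=K\cup B$ may carry an oval (type~I) or not (type~II) for the same real and complex scheme; the paper resolves this by explicit skeleton moves (transformations (f), (g), (h), (a) of~\ref{fig.Sk} and the move of~\ref{0}, see Figure~\ref{l=2}) that turn $K$ into a cubic block of type~II and push the junction ends into neighboring ovals. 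For $l=3$ in type~II one must show that the three cubic blocks can be permuted so that the type~I block is central, which requires the $(M-1)$-curve permutation result \cite[6.4.2]{DIK08}; for $l=1$ a further explicit manipulation (Figure~\ref{l=1}) is needed to move the oval out of $K$. None of this is supplied or replaced by an alternative argument in your proposal --- you yourself flag it as ``the delicate point'' and ``where the main work lies,'' but then leave it undone. Without the case-by-case analysis over $l=0,\dots,5$ (or some substitute for it), the proof does not close.
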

 	\begin{proof}
 	After the transformation specified in Lemma \ref{1white}, singular vertices can be interchanged with any ovals according to Proposition \ref{s-oval}. Therefore, it suffices to prove the theorem for the graph $\Gamma$.
 	Consider all values of~$l$.
 	\begin{enumerate}
 		\item For $l=5$, by \cite[Lemma 1]{Z21}, 
 		$\Gamma$ is unique up to weak equivalence as the graph of an $M$-curve.
 		\item For $l=4$, by \cite[6.4.2]{DIK08} and \cite[Lemma 1]{Z21}, 
 		 $\Gamma$ is unique up to weak equivalence as the graph of an $(M-1)$-curve.
 		\item For $l=3$, as was already said in the proof of Lemma \ref{1white}, 
 		$\Gamma$ is a junction of a cubic block $K$ and a graph $B$ of degree 6. Moreover, if $\Gamma$ belongs to type~I, $K$ and $B$ are blocks of type I, unique up to weak equivalence by \cite[Proposition 8, Lemma 1]{Z21}. If $\Gamma$ is of type II, $B$ contains a junction by Theorem \ref{junction}, otherwise it would be a block of type I with two ovals, and so $K$ would also be a block of type I. Therefore $\Gamma$ is a junction of three cubic blocks, one of type I and two of type II. The union of a block of type I and a block of type II yields an $(M-1)$-curve graph, in which the blocks can be permuted according to \cite[6.4.2]{DIK08}. Therefore $\Gamma$ is equivalent to a graph with a central cubic block of type I, and so is unique. 
 		\item For $l=2$, as well as for $l=3$, 
 		$\Gamma$ is a junction of a cubic block $K$ and a graph $B$ of degree 6. If $K$ is of type I, the following transformations of the skeleton of $\Gamma$ allow to obtain a junction with the ends in neighboring ovals, turning $K$ into a cubic block of type II: transformations (f), (g) \ref{fig.Sk} applied to $K$ and $B$, transformation (h) \ref{fig.Sk} applied to contact, transformation (a) \ref{fig.Sk} applied to edges $e_1,e_2$ on \ref{l=2}, transformation \ref{0} applied to edges $e'_1,e_3$ on \ref{l=2}. Then by Theorem \ref{junction} the new graph $B$ is a junction of cubic blocks of type II and therefore $\Gamma$ is unique.
 		\item For $l=1$, $\Gamma$ is a junction of a cubic block $K$ and a block $B$ of degree 6. If $K$ is of type I, the same transformations of the skeleton of $\Gamma$ as for $l=2$ carry the oval from $K$ to $B$, placing it near either end of the junction. The inverse transformations and the transformation \ref{fig.Sk} (g) yield the skeleton shown in \ref{l=1}. Therefore, the uniqueness of $\Gamma$ follows from its equivalence to the junction of a cubic block of type I and a block of degree 6 without ovals.
 		\item For $l=0$, $\Gamma$ is a junction of a cubic block $K$ of type II and a block $B$ of degree~6 without ovals, and is therefore unique.
 		
 	\end{enumerate}
 	For a fixed singular fiber, proper trigonal curves symmetric with respect to the $x$ axis have the same graph, but different real schemes $\alpha^{+}_{lp}<l>$, $\alpha^{-}_{lp}<l>$.
 	\end{proof}
 	
 	\begin{figure}
 	\begin{center}
 		\scalebox{1.5}{\includegraphics{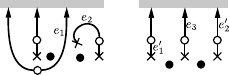}}\\
 	\end{center}
 	\caption{
 		A junction with the cubic block of type I}\label{l=2}
 		\end{figure}
 		\begin{figure}
 	\begin{center}
 		\scalebox{1.5}{\includegraphics{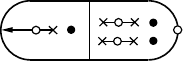}}\\
 	\end{center}
 	\caption{The skeleton of the graph $\Gamma$ with $l=1$}\label{l=1}
 	\end{figure}
 	\begin{remark}
 	Curves from the classes $\tilde{\alpha}<l>$, $\alpha^{\pm}_{ov}<l>$, differ from a curve $C\in\alpha^{\pm}_{lp}<l>$ only by the choice of the point $p\in N(C)$ (for the classes $\tilde{\alpha}<l>$, the point $p$ is on the arc of the image of branch $(1,\pm2)$, bounded by singular points, that has odd intersection multiplicity with the line $y=0$; for the classes $\alpha^{\pm}_{ov}<l>$, the point $p$ is on an oval). Therefore, the same arguments prove connectedness of these  classes as well.
 	\end{remark}

\end{document}